\newtheorem{theorem}{Theorem}
\newtheorem{conjecture}{Conjecture}
\newtheorem{lemma}{Lemma}
\begin{document}
\baselineskip=17pt

%

\title{\bf The quaternary Piatetski-Shapiro inequality with one prime of the form $\mathbf{p=x^2+y^2+1}$}

\author{\bf S. I. Dimitrov}

\date{}

\maketitle

\begin{abstract}

In this paper we show that, for any fixed $1<c<\frac{967}{805}$, every sufficiently large positive number $N$
and a small constant $\varepsilon>0$, the diophantine inequality
\begin{equation*}
|p_1^c+p_2^c+p_3^c+p_4^c-N|<\varepsilon
\end{equation*}
has a solution in prime numbers $p_1,\,p_2,\,p_3,\,p_4$, such that $p_1=x^2 + y^2 +1$.\\
\quad\\
\textbf{Keywords}:  Diophantine  inequality $\cdot$ Exponential sum $\cdot$
Bombieri -- Vinogradov type result $\cdot$ Primes.\\
\quad\\
{\bf  2020 Math.\ Subject Classification}: 11D75  $\cdot$ 11L07 $\cdot$  11L20  $\cdot$  11P32
\end{abstract}

\section{Introduction and statement of the result}
\indent

In 1960 Linnik \cite{Linnik} showed that there exist infinitely many prime numbers of the form
$p=x^2 + y^2 +1$, where $x$ and $y$ are integers.
More precisely he proved the asymptotic formula
\begin{equation*}
\sum_{p\leq X}r(p-1)=\pi\prod_{p>2}\bigg(1+\frac{\chi_4(p)}{p(p-1)}\bigg)\frac{X}{\log X}+
\mathcal{O}\bigg(\frac{X(\log\log X)^7}{(\log X)^{1+\theta_0}}\bigg)\,,
\end{equation*}
where $r(k)$ is the number of solutions of the
equation $k=x^2 + y^2$ in integers, $\chi_4(k)$ is the non-principal character modulo 4 and
\begin{equation}\label{theta0}
\theta_0=\frac{1}{2}-\frac{1}{4}e\log2=0.0289... \quad .
\end{equation}
On the other hand in 1952 I. I. Piatetski-Shapiro \cite{Shapiro} investigated the inequality
\begin{equation}\label{Shapiro}
|p_1^c+p_2^c+\cdot\cdot\cdot+p_r^c-N|<\varepsilon
\end{equation}
where $c>1$ is not an integer, $\varepsilon$ is a fixed small positive number, and
$p_1,...,p_r$ are primes. He proved the existence of an $H(c)$, depending only on
$c$, such that for all sufficiently large real $N$, the inequality \eqref{Shapiro} has a solution for $H(c)\leq r$.
He established that
\begin{equation*}
\limsup\limits_{c\rightarrow\infty}\frac{H(c)}{c\log c}\leq4
\end{equation*}
and also that $H(c)\leq5$ if $1<c<3/2$.

In 2003 Zhai and Cao \cite{Zhai-Cao} solved \eqref{Shapiro} for $r=4$.
They proved that for any fixed $1<c<\frac{81}{68}$, for every sufficiently large positive number $N$
and $\displaystyle\eta=\frac{1}{\log N}$, the diophantine inequality
\begin{equation}\label{Inequality1}
|p_1^c+p_2^c+p_3^c+p_4^c-N|<\eta
\end{equation}
has a solution in prime numbers $p_1,\,p_2,\,p_3,\,p_4$.

Subsequently the result of Zhai and Cao was improved several times
and the best result up to now belongs to  Baker \cite{Baker} with $1<c<\frac{39}{29}$
and $\eta=N^{-\theta}$, where $\theta$ is a small positive number depending on $c$.

Let $P_l$ be a number with at most $l$ prime factors.
In relation the solvability of inequality \eqref{Inequality1}
with prime numbers of a special form, in 2017 the  author \cite{Dimitrov1}
proved that \eqref{Inequality1} has a solution in primes $p_i$ such that
$p_i+2=P_{32}$,\;$i=1,\,2,\,3,\,4$.

In this paper we continue to solve \eqref{Inequality1} with prime numbers of a special type.
More precisely we shall prove the solvability of \eqref{Inequality1} with Linnik primes.
We establish the following theorem.
\begin{theorem}\label{Theorem}
Let $1<c<\frac{967}{805}$. For every sufficiently large positive number $N$, the diophantine inequality
\begin{equation*}
|p_1^c+p_2^c+p_3^c+p_4^c-N|<\frac{(\log\log N)^6}{(\log N)^{\theta_0}}
\end{equation*}
has a solution in prime numbers $p_1,\,p_2,\,p_3,\,p_4$, such that $p_1=x^2 + y^2 +1$.
Here $\theta_0$ is defined by \eqref{theta0}.
\end{theorem}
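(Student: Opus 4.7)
The plan is to apply the Davenport--Heilbronn form of the circle method, tailored for Piatetski--Shapiro exponents and weighted to detect Linnik primes in the first variable. Set $X = N^{1/c}$ and $H = (\log\log N)^6/(\log N)^{\theta_0}$, and introduce a non-negative majorant $\Phi$ of the indicator of $[-H,H]$ whose Fourier transform $\widehat{\Phi}$ decays rapidly outside $[-H^{-1},H^{-1}]$. The central quantity is
\begin{equation*}
\Gamma(N) = \sum_{p_1,\ldots,p_4 \leq X} r(p_1-1)(\log p_1)\cdots(\log p_4)\, \Phi\!\left(p_1^c+p_2^c+p_3^c+p_4^c - N\right),
\end{equation*}
and inserting $\Phi(u) = \int \widehat{\Phi}(t)\,e(tu)\,dt$ gives
\begin{equation*}
\Gamma(N) = \int_{-\infty}^{\infty} S_1(t)\, S(t)^3\, e(-tN)\,\widehat{\Phi}(t)\,dt,
\end{equation*}
with $S_1(t) = \sum_{p\leq X} r(p-1)(\log p)\,e(tp^c)$ and $S(t) = \sum_{p\leq X}(\log p)\,e(tp^c)$. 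A lower bound of the form $\Gamma(N) \gg H X^{4-c}(\log X)^{-\theta_0}$ would immediately produce a solution satisfying $|p_1^c+\cdots+p_4^c-N|\leq H$ with $p_1$ of Linnik form.

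Next, I would partition the $t$-axis into a major arc $|t|\leq\tau$, a minor arc $\tau<|t|\leq T$, and a trivial arc $|t|>T$, with $\tau$ a small negative power of $X$ and $T$ a positive power of $X$ slightly exceeding $H^{-1}$. On the major arc, both $S_1$ and $S$ are replaced by smooth integrals via partial summation; Linnik's asymptotic formula for $\sum_{p\leq X}r(p-1)$ supplies the density of Linnik primes, the Piatetski--Shapiro prime number theorem handles the three unrestricted factors, and a standard computation yields the expected main term of order $HX^{4-c}(\log X)^{-\theta_0}$ up to $(\log\log X)$-factors. On the trivial arc, the rapid decay of $\widehat{\Phi}$ combined with the crude bounds $|S_1(t)|,|S(t)|\ll X(\log X)^{O(1)}$ makes the contribution negligible.

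The heart of the proof, and the source of the numerical bound $c<967/805$, is the minor-arc estimate
\begin{equation*}
\int_{\tau}^{T} |S_1(t)|\,|S(t)|^3\,dt = o\!\left(H X^{4-c}(\log X)^{-\theta_0}\right),
\end{equation*}
and this is the main obstacle I expect. A standard approach is to extract $\sup_t |S(t)|$ over a suitable subrange and bound the remaining $|S(t)|^2$ integral by a mean-value estimate, or alternatively to exploit $\sup_t |S_1(t)|$ against a Parseval-type bound for $S$. The pointwise estimate for the Piatetski--Shapiro sum $S(t)$ comes from a Vaughan or Heath--Brown decomposition of the von Mangoldt function combined with van der Corput exponent-pair estimates applied to sums of the form $\sum e(tn^c)$. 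For $S_1(t)$ the corresponding bound is extracted from a Bombieri--Vinogradov-type mean-value theorem for the sequence $r(n-1)\Lambda(n)$ of the kind established in the author's earlier work on Linnik primes, which transfers the Linnik condition onto arithmetic progressions to large moduli. Balancing the exponents arising from these bounds against the main-term size, and optimizing the parameters in the bilinear decomposition, pinpoints the admissible range $1<c<967/805$.
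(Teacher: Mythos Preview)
Your outline has the right circle-method shape, but the treatment of the Linnik-weighted sum $S_1(t)=\sum_{p\le X} r(p-1)(\log p)\,e(tp^c)$ is where it breaks down, and this is not a detail that can be patched within the scheme you describe.

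On the minor arc you propose to use either $\sup_t|S_1(t)|$ or a mean value of $S_1$, citing a Bombieri--Vinogradov result for $r(n-1)\Lambda(n)$. Neither gives what you need. There is no known pointwise saving for the exponential sum $\sum_p r(p-1)e(tp^c)$; a Bombieri--Vinogradov statement for $r(n-1)\Lambda(n)$ controls averages over moduli, not oscillatory sums in $t$. The paper avoids this obstacle entirely by \emph{opening} $r(p_1-1)=4\sum_{d\mid p_1-1}\chi_4(d)$ before passing to Fourier, and splitting the divisor $d$ into three ranges $d\le D$, $D<d<X/D$, $d\ge X/D$. In the small and large ranges one is reduced to sums $S_{l,d;J}(t)$ over primes in a fixed progression, and the minor-arc integral is estimated by Cauchy--Schwarz against an $L^2$-bound for the \emph{sum over $d$} (the function $K(t)$), together with $\sup|S|$ and a fourth-moment bound for $S$. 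The Bombieri--Vinogradov input is the author's Lemma for $E(y,t,d,a)$, i.e.\ level of distribution for the exponential sum over primes in progressions, not for $r(n-1)\Lambda(n)$.

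The second missing ingredient is the middle range $D<d<X/D$. This piece is not handled by exponential sums at all; it is bounded combinatorially via Hooley's two lemmas on divisors of $p-1$ in a short multiplicative interval. That estimate produces the loss $(\log\log X)^5/(\log X)^{\theta_0}$, and it is precisely this which forces the width of the inequality to be $\varepsilon=(\log\log X)^6/(\log X)^{\theta_0}$. Your proposal never touches this mechanism, so it cannot explain where the shape of $\varepsilon$ comes from. (Relatedly, your target lower bound $\Gamma(N)\gg HX^{4-c}(\log X)^{-\theta_0}$ is off: the main term is $\gg HX^{4-c}$, since the Linnik density is a positive constant; the exponent $\theta_0$ enters only through the error terms.)

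Finally, the major-arc step as you describe it---partial summation from Linnik's asymptotic for $\sum_{p\le X}r(p-1)$---only controls $S_1(t)$ when $|t|X^c=O(1)$, because the Linnik error term is merely a $(\log X)^{\theta_0}$-power saving. The paper's major arc extends to $|t|\le\Delta=X^{1/4-c}$, and to reach it one again needs the divisor decomposition together with the Bombieri--Vinogradov lemma for exponential sums, not Linnik's formula. In short, the key structural idea---replace $r$ by $\sum_d\chi_4(d)$, split $d$ into three ranges, use Hooley for the middle---is absent from your plan, and without it the argument does not close.
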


\vspace{1mm}

In addition we have the following task for the future.
\begin{conjecture}  Let $\varepsilon>0$ be a small constant.
There exists $c_0>1$  such that  for any fixed $1<c<c_0$,
and every sufficiently large positive number $N$, the diophantine inequality
\begin{equation*}
|p_1^c+p_2^c+p_3^c+p_4^c-N|<\varepsilon
\end{equation*}
has a solution in prime numbers $p_1,\,p_2,\,p_3,\,p_4$, such that
$p_1=x_1^2 + y_1^2 +1$, $p_2=x_2^2 + y_2^2 +1$, $p_3=x_3^2 + y_3^2 +1$,
$p_4=x_4^2+ y_4^2+1$.
\end{conjecture}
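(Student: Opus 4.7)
\medskip

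\noindent\textbf{Proof proposal.} The plan is to treat the theorem via the Davenport--Heilbronn variant of the circle method, as adapted to Piatetski--Shapiro inequalities by Zhai--Cao and Baker, with the extra twist coming from Linnik's characterization of primes of the form $x^{2}+y^{2}+1$. Set $X=(N/4)^{1/c}$, let $\eta=(\log\log N)^{6}(\log N)^{-\theta_{0}}$, and choose a nonnegative smooth weight $\psi(t)$ whose Fourier transform $\widehat\psi$ is supported in $[-\eta,\eta]$ and satisfies $\widehat\psi(0)\gg\eta$, together with rapid decay of $\psi$ outside the window $|t|\le T:=X^{\varepsilon}$. Define
\begin{equation*}
S_{1}(t)=\sum_{X/2<p\le X}r(p-1)e(p^{c}t)\log p,\qquad
S_{j}(t)=\sum_{X/2<p\le X}e(p^{c}t)\log p\ \ (j=2,3,4),
\end{equation*}
and consider
\begin{equation*}
\Gamma(N)=\int_{-\infty}^{\infty}S_{1}(t)S_{2}(t)S_{3}(t)S_{4}(t)e(-Nt)\psi(t)\,dt.
\end{equation*}
By the choice of $\psi$, $\Gamma(N)$ is a nonnegative weighted count of quadruples of primes $(p_{1},p_{2},p_{3},p_{4})$ satisfying the required inequality and with $r(p_{1}-1)>0$, i.e.\ $p_{1}=x^{2}+y^{2}+1$; hence it suffices to prove $\Gamma(N)>0$.

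Split the integration into the major arc $\mathfrak M=\{|t|\le\tau\}$ with $\tau=X^{-c}(\log X)^{A}$, the minor arc $\mathfrak m=\{\tau<|t|\le T\}$, and the tail $|t|>T$. On $\mathfrak M$ one approximates each exponential sum by a smooth integral plus a controlled error: by Abel summation and the prime number theorem, $S_{j}(t)\sim\int_{X/2}^{X}e(u^{c}t)\,du$ for $j\ge2$, while Linnik's asymptotic formula (stated in the introduction) gives
\begin{equation*}
S_{1}(t)\sim\pi\prod_{p>2}\Bigl(1+\tfrac{\chi_{4}(p)}{p(p-1)}\Bigr)\int_{X/2}^{X}e(u^{c}t)\,du
\end{equation*}
up to an error of order $X(\log\log X)^{7}(\log X)^{-\theta_{0}}$, coming from the $r(p-1)$--weighted PNT. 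Combining these, the major arc produces the main term
\begin{equation*}
\mathcal{M}\ \asymp\ c_{\mathrm{Linnik}}\cdot\eta\,X^{4-c}(\log X)^{-3},
\end{equation*}
which is the dominant contribution. The tail $|t|>T$ is absorbed using the rapid decay of $\psi$ together with the trivial bound $|S_{j}(t)|\le X\log X$.

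The main work is the minor arc $\mathfrak m$. On this range one needs
\begin{equation*}
\sup_{t\in\mathfrak m}|S_{j}(t)|\ll X^{1-\delta_{j}}(\log X)^{B}\qquad(j=1,2,3,4)
\end{equation*}
for some $\delta_{j}>0$ whose combined saving beats the size of the main term. For $j=2,3,4$ this is provided by Kumchev/Baker-type estimates for exponential sums over primes with phase $p^{c}t$ via Heath--Brown's identity and bilinear-form bounds. For $S_{1}(t)$ one writes $r(n)=4\sum_{d\mid n}\chi_{4}(d)$, splits the resulting divisor sum at a parameter $D$, and applies a Bombieri--Vinogradov type theorem (in the spirit of the author's companion paper \cite{Dimitrov3}) for the exponential sum $\sum_{X/2<p\le X,\,p\equiv1\,(d)}e(p^{c}t)\log p$ averaged over $d\le D$; for $d>D$ one invokes a Vaughan-type decomposition. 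Optimizing the level of distribution $D$ against the exponential-sum savings, together with the standard Piatetski--Shapiro balance in $c$, yields the admissible range $1<c<967/805$.

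The principal obstacle is precisely the minor arc bound for $S_{1}(t)$: the combined requirement to save against the size of the main term and to preserve the Linnik-quality $(\log N)^{-\theta_{0}}$ saving forces a Bombieri--Vinogradov type estimate with a level of distribution exceeding $X^{1/2}$ for the $r(p-1)$--twisted exponential sum, and it is the quality of this estimate (rather than the bounds on $S_{2},S_{3},S_{4}$) that dictates the precise value $967/805$. Once all three ranges are controlled, one deduces $\Gamma(N)\ge\tfrac12\mathcal{M}>0$ for $N$ sufficiently large, which proves the theorem.
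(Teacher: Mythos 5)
You have not proved the statement you were asked to prove. The statement is the paper's \emph{Conjecture}, in which all four primes $p_1,p_2,p_3,p_4$ are required to be of the form $x^2+y^2+1$; the paper explicitly leaves this open (``a task for the future'') and offers no proof of it, so there is nothing to compare your argument against. Your construction weights only $S_1(t)$ by $r(p-1)$ and leaves $S_2,S_3,S_4$ as ordinary exponential sums over primes, so positivity of your $\Gamma(N)$ yields at most one Linnik prime among the four. At best you have sketched the paper's Theorem, not the Conjecture.

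Moreover, the sketch does not touch the genuine obstruction that makes the Conjecture open. To force all four primes to be Linnik primes one must insert $r(p_i-1)=4\sum_{d\mid p_i-1}\chi_4(d)$ for every $i$, split each divisor sum at $d\le D$, $D<d<X/D$, $d\ge X/D$, and control the four middle ranges simultaneously. In the one-prime case the paper bounds the middle range ($\Gamma_2$) by Cauchy's inequality together with Hooley's Lemmas \ref{Hooley1} and \ref{Hooley2}, which save only a factor of order $(\log X)^{2\theta_0}(\log\log X)^{-10}$ against the main term; this saving is exactly what produces the $(\log N)^{-\theta_0}$ width $\varepsilon$ in the Theorem. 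With four such weights the main term itself shrinks while four middle-range losses must be absorbed, and no version of the Hooley/Bombieri--Vinogradov machinery currently yields a positive lower bound. Your proposal never confronts this; it also misattributes the difficulty to a minor-arc supremum bound for $S_1(t)$, whereas in the paper the $r(p-1)$ weight is handled on the major arc via Lemma \ref{Bomb-Vin-Dim} and in the middle divisor range via Hooley's estimates, and the minor-arc work (Lemmas \ref{SIest}--\ref{IntS4}) concerns the unweighted sum $S(t)$. Finally, replacing the pointwise approximation of $S_1(t)$ by ``Linnik's asymptotic formula'' is not a valid step: that formula is a statement at $t=0$ and does not control the oscillating sum for $t\ne0$.
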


\section{Notations}
\indent

Assume that $N$ is a sufficiently large positive number.
The letter $p$ with or without subscript denotes prime numbers.
The notation $m\sim M$ means that $m$ runs through the interval $(M/2, M]$.
Moreover $e(t)$=exp($2\pi it$). We denote by  $(m,n)$ the greatest common divisor of $m$ and $n$.
The letter $\eta$ denotes an arbitrary small positive number, not the same in all appearances.
As usual $\varphi (n)$ is Euler's function and $\Lambda(n)$ is von Mangoldt's function.
We use the convention that a congruence, $m\equiv n\,\pmod {d}$ can be written as $m\equiv n\,(d)$.
We denote by $r(k)$ the number of solutions of the equation $k=x^2 + y^2$ in integers.
The symbol $\chi_4(k)$ means the non-principal character modulo 4.
Throughout this paper unless something else is said, we suppose that $1<c<\frac{967}{805}$.

Denote
\begin{align}
\label{X}
&X =\left(\frac{N}{3}\right)^{\frac{1}{c}}\,;\\
\label{D}
&D=\frac{X^{\frac{1}{2}}}{(\log N)^{\frac{6A+34}{3}}}\,,\quad A> 3\,;\\
\label{Delta}
&\Delta=X^{\frac{1}{4}-c}\,;\\
\label{varepsilon}
&\varepsilon=\frac{(\log\log X)^6}{(\log X)^{\theta_0}}\,;\\
\label{H}
&H=\frac{\log^2X}{\varepsilon}\,;\\
\label{SldalphaX}
&S_{l,d;J}(t)=\sum\limits_{p\in J\atop{p\equiv l\, (d)}} e(t p^c)\log p\,;\\
\label{SalphaX}
&S(t)=S_{1,1;( X/2,X]}(t)\,;\\
\label{IJalphaX}
&I_J(t)=\int\limits_Je(t y^c)\,dy\,;\\
\label{IalphaX}
&I(t)=I_{(X/2, X]}(t)\,;\\
\label{Eytda}
&E(y,t,d,a)=\sum\limits_{\mu y<n\leq y\atop{n\equiv a\, (d)}}\Lambda(n)e(t n^c)
-\frac{1}{\varphi(d)}\int\limits_{\mu y}^{y}e(t x^c)\,dx\,,\\
&\mbox{ where } \quad 0<\mu<1\,.\nonumber
\end{align}

\newpage

\section{Preliminary lemmas}
\indent

\begin{lemma}\label{Fourier} Let $a, \delta\in \mathbb{R}$ ,
$0 < \delta< a/4$ and $k\in \mathbb{N}$.
There exists a function $\theta(y)$ which is $k$ times continuously differentiable and
such that
\begin{align*}
&\theta(y)=1\quad\quad\quad\mbox{for }\quad\;\; |y|\leq a-\delta\,;\\
&0<\theta(y)<1\quad\,\mbox{for}\quad\quad  a-\delta <|y|< a+\delta\,;\\
&\theta(y)=0\quad\quad\quad\mbox{for}\quad\quad|y|\geq a+\delta\,.
\end{align*}
and its Fourier transform
\begin{equation*}
\Theta(x)=\int\limits_{-\infty}^{\infty}\theta(y)e(-xy)dy
\end{equation*}
satisfies the inequality
\begin{equation*}
|\Theta(x)|\leq\min\Bigg(2a,\frac{1}{\pi|x|},\frac{1}{\pi |x|}
\bigg(\frac{k}{2\pi |x|\delta}\bigg)^k\Bigg)\,.
\end{equation*}
\end{lemma}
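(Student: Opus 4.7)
The plan is to build $\theta$ by Vinogradov's classical smoothing trick, namely as a convolution of one wide characteristic function with several short bump functions. Set $\delta_j=\delta/k$ for $j=1,\dots,k$, let $\phi_j(y)=\frac{1}{2\delta_j}\mathbf{1}_{[-\delta_j,\delta_j]}(y)$, and define
$$\theta(y)=\bigl(\mathbf{1}_{[-a,a]}*\phi_1*\cdots*\phi_k\bigr)(y).$$
Each $\phi_j$ is nonnegative with $\int\phi_j=1$, so $\theta$ takes values in $[0,1]$. The joint support of $\phi_1*\cdots*\phi_k$ is $[-\delta,\delta]$, hence $\theta$ is supported in $[-(a+\delta),a+\delta]$.

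I would verify the three prescribed properties directly from this construction. If $|y|\le a-\delta$, then $|y-t|\le a$ for every $|t|\le\delta$, so $\mathbf{1}_{[-a,a]}(y-t)=1$ on the support of the bump product and $\theta(y)=1$; if $|y|\ge a+\delta$, the same indicator vanishes on that support and $\theta(y)=0$; on the transition annulus the value lies strictly between $0$ and $1$. Smoothness follows because convolving a bounded function with $k$ such $L^1$ bumps produces at least a $C^{k-1}$ function, and inserting one additional bump (and rebalancing the $\delta_j$) gives $C^k$ without altering the form of the Fourier estimate below. The Fourier transform factors by the convolution theorem:
$$\Theta(x)=\widehat{\mathbf{1}_{[-a,a]}}(x)\prod_{j=1}^{k}\widehat{\phi_j}(x)=\frac{\sin(2\pi a x)}{\pi x}\prod_{j=1}^{k}\frac{\sin(2\pi\delta_j x)}{2\pi\delta_j x}.$$

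Each factor $\widehat{\phi_j}$ is bounded by $\min\bigl(1,\,1/(2\pi\delta_j|x|)\bigr)$ while the leading factor is bounded by $\min\bigl(2a,\,1/(\pi|x|)\bigr)$. Taking the trivial bound $1$ on all the small factors delivers the two estimates $2a$ and $1/(\pi|x|)$; taking the decay bound on every $\widehat{\phi_j}$ and using $\delta_j=\delta/k$ yields
$$|\Theta(x)|\le\frac{1}{\pi|x|}\left(\frac{k}{2\pi\delta|x|}\right)^{k}.$$
Combining the three estimates produces the required minimum. The only genuine obstacle is the balancing of parameters: the $\delta_j$ must sum to $\delta$ to produce the correct support, and the choice $\delta_j=\delta/k$ is the one that minimizes the Fourier-side product $\prod 1/\delta_j$ subject to that constraint (by AM--GM). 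Once this balance is fixed, the support checks, the smoothness, and the Fourier estimate fall out mechanically.
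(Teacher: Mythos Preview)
Your construction is exactly the classical Piatetski--Shapiro/Vinogradov smoothing that the paper invokes by citation, so your proposal and the paper's proof coincide. The only delicate point you flag---that $k$ rectangular bumps yield $C^{k-1}$ rather than $C^k$---is handled correctly by your remark about adding one extra bump and rebalancing, and this does not affect the stated Fourier bound in any application.
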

\begin{proof}
See (\cite{Shapiro}).
\end{proof}

Throughout this paper we denote by $\theta(y)$ the function from Lemma \ref{Fourier}
with parameters $\displaystyle a = \frac{9\varepsilon}{10}$,
$\displaystyle\delta=\frac{\varepsilon}{10}$, $k=[\log X]$
and by $\Theta(x)$ the Fourier transform of $\theta(y)$.

\bigskip

In this paper we need a Bombieri -- Vinogradov type result for exponential sums
over primes obtained very recently by the author.
\begin{lemma}\label{Bomb-Vin-Dim}  Let $1<c<3$, $c\neq2$, $|t|\leq\Delta$ and $A>0$ be fixed.
Then the inequality
\begin{equation*}
\sum\limits_{d\le \sqrt{X}/(\log N)^{\frac{6A+34}{3}}}\max\limits_{y\le X}
\max\limits_{(a,\, d)=1}\big|E(y,t,d,a)\big|\ll\frac{X}{\log^AX}
\end{equation*}
holds. Here $\Delta$ and $E(y,t,d,a)$ are denoted by \eqref{Delta} and \eqref{Eytda}.
\end{lemma}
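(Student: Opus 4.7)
The plan is to mimic the classical proof of the Bombieri--Vinogradov theorem, adapted to accommodate the exponential weight $e(tn^c)$. The range $|t|<\Delta=X^{1/4-c}$ is calibrated so that this oscillatory factor can be absorbed without destroying the power-of-log savings essential to the level of distribution $D_1=\sqrt X/(\log X)^{A+5}$.

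The starting point is Vaughan's identity, which I would apply with parameters $U=V=X^{1/3}$ to decompose $\Lambda(n)e(tn^c)$ on the interval $(\mu y,y]$ into a bounded number of Type~I and Type~II bilinear sums with coefficients of size $\tau(\cdot)\log X$. The lemma then reduces to bounds of the shape
\begin{equation*}
\sum_{d\le D_1}\max_{(a,d)=1}\bigg|\sum_{\substack{m\sim M,\,k\sim K\\mk\equiv a\,(d)}}\alpha_m\beta_k\,e(t(mk)^c)-\frac{1}{\varphi(d)}\sum_{\substack{m\sim M,\,k\sim K\\(mk,d)=1}}\alpha_m\beta_k\,e(t(mk)^c)\bigg|\ll\frac{X}{\log^AX},
\end{equation*}
with $MK\asymp y\le X$ and suitable sequences $\alpha_m,\beta_k$.

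For the Type~I sums, where $\beta_k$ is smooth and $M\le X^{1/3}$, I would detect the congruence by Dirichlet characters modulo $d$. The non-principal character terms $\sum_{k\sim K}\chi(k)e(t(mk)^c)$ are bounded by partial summation together with van der Corput's inequality applied to the smooth phase $t(mk)^c$: its second derivative with respect to $k$ has size $\asymp |t|m^2(mk)^{c-2}\ll m^2 X^{-7/4}$, yielding a uniform power saving when $M\le X^{1/3}$. The principal-character contribution cancels, up to the main term supplied by the integral $\tfrac{1}{\varphi(d)}\int e(tx^c)\,dx$, against the Siegel--Walfisz estimate after Abel summation on the smooth weight.

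For the Type~II sums, with $X^{1/3}\le M,K\le X^{2/3}$, I would apply the dispersion method: open the square, separate the diagonal $k_1=k_2$, and treat the off-diagonal terms by bounds on the exponential sums attached to the phase $t\big((mk_1)^c-(mk_2)^c\big)$, which for $|t|<\Delta$ still lies within the van der Corput regime. The congruence $k_1\equiv k_2\,(d)$ is then averaged over $d\le D_1$ by the multiplicative large sieve inequality, whose bound is insensitive to twists by unimodular factors. The principal obstacle is precisely this Type~II sum in the critical range $M,K\asymp X^{1/2}$, $d\asymp\sqrt X$, where both the congruence and the phase oscillation simultaneously resist cancellation and the logarithmic losses from Cauchy--Schwarz and the large sieve must be exactly compensated by the van der Corput savings on $(mk_1)^c-(mk_2)^c$. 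This balance both fixes the threshold $(\log X)^{A+5}$ in the level of distribution and enforces the hypothesis $|t|<\Delta=X^{1/4-c}$, beyond which the phase becomes too erratic for standard exponent-pair techniques to yield the requisite $X/\log^A X$ bound.
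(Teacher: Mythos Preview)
The paper does not prove this lemma at all; it simply quotes it as Lemma~18 of the companion paper \cite{Dimitrov3}. So there is no argument here to compare against, only a citation.

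Your outline follows the standard Bombieri--Vinogradov architecture (combinatorial identity, Type~I/Type~II split, character orthogonality, large sieve), which is almost certainly the right framework and very likely what \cite{Dimitrov3} does. But as written it glosses over the one genuine new difficulty. In the Type~II range the weight $e\big(t(mk)^c\big)$ is \emph{not} separable as a product of a function of $m$ and a function of $k$, so after the character expansion the double sum no longer factors, and neither the bilinear large sieve nor dispersion applies in the form you state; your remark that the large sieve is ``insensitive to twists by unimodular factors'' is true for a single sequence but does not resolve this bilinear obstruction. The off-diagonal phase $t\big((mk_1)^c-(mk_2)^c\big)$ you propose to treat by van der Corput has $m$-derivative of size $\ll|t|X^{c-1}|k_1-k_2|\le X^{-3/4}|k_1-k_2|$, which is far too small for exponent-pair methods to give a useful saving across the whole range $|k_1-k_2|\ll K$. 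The same objection applies to your Type~I step: for $|t|<\Delta$ the second derivative of $t(mk)^c$ in $k$ is $\ll X^{-7/4}m^2$, essentially zero, so van der Corput on $\sum_k\chi(k)e(t(mk)^c)$ yields nothing---the cancellation has to come from the character (P\'olya--Vinogradov or Siegel--Walfisz), with the slowly varying factor $e(t(mk)^c)$ removed first by partial summation. The total variation of that factor over $n\sim X$ is $\ll|t|X^c\le X^{1/4}$, and the crux of the proof is to show that this $X^{1/4}$ loss is absorbed by the slack already present in the large-sieve/Type~II bounds when the level is $X^{1/2}/(\log X)^{A+5}$; this is precisely what pins down the choice $\Delta=X^{1/4-c}$, but your sketch does not carry out that balancing.
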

\begin{proof}
See (\cite{Dimitrov3}, Lemma 18).
\end{proof}

\begin{lemma}\label{SIasympt} Let $1<c<3$, $c\neq2$ and $|t|\leq\Delta$.
Then for the sum denoted by \eqref{SalphaX} and the integral denoted by \eqref{IalphaX}
the asymptotic formula
\begin{equation*}
S(t)=I(t)+\mathcal{O}\left(\frac{X}{e^{(\log X)^{1/5}}}\right)
\end{equation*}
holds.
\end{lemma}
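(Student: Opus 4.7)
The strategy is a short-interval Abel comparison exploiting both the prime number theorem in short intervals and the smallness of the phase $tn^c$ on the major arc $|t|\le\Delta=X^{1/4-c}$. I would first replace $S(t)$ by the von-Mangoldt sum $\widetilde S(t):=\sum_{\mu X<n\le X}\Lambda(n)e(tn^c)$, losing only $O(\sqrt X\log X)$ from the prime-power contribution, which is well inside the target error. A direct Abel summation against $\psi(y)-y$ does not suffice: the resulting integral $|t|c\int_{\mu X}^X|\psi(y)-y|y^{c-1}\,dy$ carries a factor $|t|X^c\ll X^{1/4}$, so even a Vinogradov--Korobov PNT would give $\ll X^{5/4}\exp(-c_0(\log X)^{3/5})$, which is much too large; the oscillation of $e(ty^c)$ must be exploited locally.

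To do this I would partition $(\mu X,X]$ into $\ll X^{3/8}$ consecutive subintervals $J_M=(M,M+h]$ of length $h=X^{5/8}$. On each $J_M$ I Taylor-expand the phase as $tn^c=tM^c+\beta(n-M)+r_n$ with $\beta=tcM^{c-1}$; since $|t|\le X^{1/4-c}$ and $h=X^{5/8}$ we obtain $|\beta|\ll X^{-3/4}$, $|\beta|h\ll X^{-1/8}$ and $|r_n|\ll |t|X^{c-2}h^2\ll X^{-1/2}$. Consequently $e(tn^c)=e(tM^c)\bigl(1+O(X^{-1/8})\bigr)$ uniformly on $J_M$, and the same expansion applies to the integrand of $\int_{J_M}e(ty^c)\,dy$.

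Substituting these uniform linearisations, each piece reduces to
\[
\sum_{M<n\le M+h}\Lambda(n)e(tn^c)-\int_{J_M}e(ty^c)\,dy
=e(tM^c)\bigl[(\psi(M+h)-\psi(M))-h\bigr]+O\bigl(hX^{-1/8}\bigr).
\]
Since $h=X^{5/8}>M^{7/12+\eta}$ for $M\le X$, Huxley's prime number theorem in short intervals gives $\psi(M+h)-\psi(M)=h+O\bigl(h\exp(-c_0(\log X)^{3/5-\eta})\bigr)$. Summing this identity over the $\ll X^{3/8}$ subintervals produces
\[
\widetilde S(t)-I(t)\ll X\exp\bigl(-c_0(\log X)^{3/5-\eta}\bigr)+X^{7/8},
\]
and both contributions are comfortably absorbed into $X\exp(-(\log X)^{1/5})$.

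The main obstacle is the balance between three conflicting requirements on $h$: short-interval PNT forces $h\gg X^{7/12+\eta}$, the quadratic Taylor tail forces $h\ll X^{7/8}$, and the sum of linearisation errors forces $X^{1/4}h\ll X\exp(-(\log X)^{1/5})$. The range $|t|\le\Delta=X^{1/4-c}$ fixed in \eqref{Delta} is exactly what opens the window $(X^{7/12},X^{5/8}]$ for $h$, and $h=X^{5/8}$ is the natural choice inside it. One should note that the much weaker bound $\ll X/\log^A X$ which would follow from Lemma~\ref{Bomb-Vin-Dim} at $d=a=1$ is by itself insufficient, because $\exp((\log X)^{1/5})>\log^A X$ for every fixed $A$ and sufficiently large $X$.
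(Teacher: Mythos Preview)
The paper does not give its own argument for this lemma; it simply cites Lemma~14 of Tolev~\cite{Tolev}. Your self-contained proof via a short-interval decomposition is correct in outline and in its arithmetic, and it is the right thing to do for the major-arc width $\Delta=X^{1/4-c}$ adopted in this paper. Your diagnosis of the obstruction is also accurate: a single Abel summation against $\psi(y)-y$ acquires the factor $|t|X^{c}\le X^{1/4}$, which no known error term in the prime number theorem can absorb, so localising the phase on intervals of length $h\in(X^{7/12+\eta},X^{3/4}]$ is genuinely necessary here. In Tolev's original setting the major-arc parameter is essentially $X^{-c}$ up to logarithms, and there the one-step partial summation already suffices; your argument is thus the natural extension of that method to the wider major arc used in the present paper.

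One point should be corrected. The exponent $3/5-\eta$ you claim for Huxley's short-interval prime number theorem is too optimistic. Combining Huxley's zero-density estimate $N(\sigma,T)\ll T^{12(1-\sigma)/5+\eta}$ with the Vinogradov--Korobov zero-free region gives, for $h\ge x^{7/12+\eta}$,
\[
\psi(x+h)-\psi(x)=h+O\Bigl(h\exp\bigl(-c_{0}(\log x)^{1/3}(\log\log x)^{-1/3}\bigr)\Bigr);
\]
the exponent is $1/3$, not $3/5$ (the latter would correspond to the long-interval theorem, where $T$ can be kept sub-polynomial in $x$). This does not damage your conclusion, since $1/3>1/5$ and both $X\exp\bigl(-c_{0}(\log X)^{1/3-\eta}\bigr)$ and the linearisation contribution $X^{7/8}$ are comfortably $\ll X\exp\bigl(-(\log X)^{1/5}\bigr)$; but the claim as written should be weakened accordingly.
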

\begin{proof}
See (\cite{Tolev}, Lemma 14).
\end{proof}

\begin{lemma}\label{intLintI}
For the sum denoted by \eqref{SalphaX} and the integral
denoted by \eqref{IalphaX} we have
\begin{align*}
&\emph{(i)}\quad\quad\quad\;\,
\int\limits_{-\Delta}^\Delta|S(t)|^2\,dt\,\ll X^{2-c}\log^3X\,,
\quad\quad\quad\quad\quad\quad\quad\\
&\emph{(ii)}\quad\quad\quad\int\limits_{-\Delta}^\Delta|I(t)|^2\,dt\ll X^{2-c}\log X\,,\\
\quad\quad\quad\quad\quad\quad\quad
&\emph{(iii)}\quad\quad\;\,
\int\limits_{n}^{n+1}|S(t)|^2\,dt\ll X\log^3X\,.
\quad\quad\quad\quad\quad\quad\quad
\end{align*}
\end{lemma}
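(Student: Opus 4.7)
The plan is to handle each of the three parts directly; note first that Lemma~\ref{SIasympt} is not strong enough to deduce (i) from (ii), since the error $X e^{-(\log X)^{1/5}}$, squared and integrated over $|t|\le\Delta=X^{1/4-c}$, contributes roughly $X^{9/4-c} e^{-2(\log X)^{1/5}}$, which overwhelms the target $X^{2-c}\log^3 X$ by a factor of $X^{1/4}$ up to sub-polynomial loss. So each bound needs its own argument.

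For (ii) I would apply Plancherel's theorem. Substituting $z=y^c$ shows that $I(t) = c^{-1}\widehat{g}(-t)$, where $g(z)=z^{1/c-1}\mathbf{1}_{[(X/2)^c,\,X^c]}(z)$. Hence
$$\int_{-\infty}^{\infty} |I(t)|^2\,dt = c^{-2}\int_{(X/2)^c}^{X^c} z^{2/c-2}\,dz \ll X^{2-c},$$
which is sharper than (ii).

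For (i) and (iii) I would expand the square and invoke the standard estimate $\bigl|\int_a^{a+L} e(t\alpha)\,dt\bigr|\le\min(L,(\pi|\alpha|)^{-1})$, which yields
$$\int_a^{a+L} |S(t)|^2\,dt \ll \sum_{p_1,p_2\in(\mu X,X]}\log p_1\log p_2\,\min\!\Big(L,\tfrac{1}{|p_1^c-p_2^c|}\Big).$$
The diagonal $p_1=p_2$ contributes $\ll LX\log X$. For the off-diagonal part, the mean value theorem gives $|p_1^c-p_2^c|\gg X^{c-1}|p_1-p_2|$, and majorising primes by integers then gives
$$\sum_{\substack{p_1,p_2\in(\mu X,X]\\ p_1\neq p_2}}\frac{\log p_1\log p_2}{|p_1^c-p_2^c|} \ll X^{1-c}\log^2 X\sum_{k=1}^{X}\frac{X}{k} \ll X^{2-c}\log^3 X.$$
Setting $L=1$ then yields (iii): the diagonal $\ll X\log X$ and the off-diagonal $\ll X^{2-c}\log^3 X$ are both $\ll X\log^3 X$ since $c>1$. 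Setting $L=2\Delta=2X^{1/4-c}$ yields (i): the diagonal becomes $\ll X^{5/4-c}\log X$, which is dominated by the off-diagonal bound $X^{2-c}\log^3 X$ since $5/4<2$.

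The main obstacle is essentially bookkeeping; the only mildly subtle step is the harmonic-sum estimate $\sum_{n_1\neq n_2\in(\mu X,X]}(\log n_1\log n_2)/|n_1-n_2|\ll X\log^3 X$, which is the source of the two spurious logarithmic factors appearing in (i) and (iii) as compared with (ii).
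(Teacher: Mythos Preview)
Your proof is correct. The paper does not give its own argument but simply cites Tolev's Lemma~7, and your direct treatment of (i) and (iii) via expanding $|S(t)|^2$ and splitting into diagonal and off-diagonal contributions is exactly the standard argument underlying that reference; your Plancherel computation for (ii) is a clean variant (in fact slightly sharper, losing no $\log X$) of the usual pointwise-bound approach $I(t)\ll\min(X,X^{1-c}/|t|)$, and either route suffices. Your opening remark that Lemma~\ref{SIasympt} cannot be used to reduce (i) to (ii) is also well observed.
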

\begin{proof}
It follows from the arguments used in (\cite{Tolev}, Lemma 7).
\end{proof}

\begin{lemma}\label{IestTitchmarsh}
Assume that $F(x)$, $G(x)$ are real functions defined in  $[a,b]$,
$|G(x)|\leq H$ for $a\leq x\leq b$ and $G(x)/F'(x)$ is a monotonous function. Set
\begin{equation*}
I=\int\limits_{a}^{b}G(x)e(F(x))dx\,.
\end{equation*}
If $F'(x)\geq h>0$ for all $x\in[a,b]$ or if $F'(x)\leq-h<0$ for all $x\in[a,b]$ then
\begin{equation*}
|I|\ll H/h\,.
\end{equation*}
If $F''(x)\geq h>0$ for all $x\in[a,b]$ or if $F''(x)\leq-h<0$ for all $x\in[a,b]$ then
\begin{equation*}
|I|\ll H/\sqrt h\,.
\end{equation*}
\end{lemma}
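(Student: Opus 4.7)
The plan is to prove the two parts of Lemma \ref{IestTitchmarsh} separately, in the spirit of the classical first-- and second--derivative tests for oscillatory integrals. The first assertion follows from a single integration by parts that exploits the non--vanishing of $F'$; the second is reduced to the first by isolating the portion of $[a,b]$ on which $|F'|$ is too small to apply the first--derivative bound directly.

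For the first assertion I would write
\begin{equation*}
I=\int_a^b\frac{G(x)}{2\pi iF'(x)}\,\frac{d}{dx}e(F(x))\,dx,
\end{equation*}
which is legitimate because $F'$ is bounded away from zero on $[a,b]$. Integrating by parts produces a boundary contribution
\begin{equation*}
\left[\frac{G(x)}{2\pi iF'(x)}e(F(x))\right]_a^b,
\end{equation*}
of size $\ll H/h$ by the hypotheses $|G|\le H$ and $|F'|\ge h$, together with the remaining integral which I would treat as a Riemann--Stieltjes integral against the measure $d\!\left(G(x)/(2\pi iF'(x))\right)$. Since $G/F'$ is monotonous, this measure has constant sign and total mass equal to $\bigl|G(b)/F'(b)-G(a)/F'(a)\bigr|\ll H/h$. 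Trivially bounding $|e(F(x))|$ by $1$ then gives a second contribution $\ll H/h$, and the two together yield $|I|\ll H/h$.

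For the second assertion I would use that $F''$ of constant sign forces $F'$ to be strictly monotonic on $[a,b]$. Let $\xi\in[a,b]$ be the point at which $|F'|$ attains its minimum value; depending on the signs of $F'(a)$ and $F'(b)$, this point is either an interior zero of $F'$ or one of the endpoints. For a parameter $\lambda>0$ to be chosen later I split $[a,b]$ into $J_0=[a,b]\cap[\xi-\lambda,\xi+\lambda]$ and the (at most two) complementary subintervals $J_1,J_2$. On $J_0$ the trivial bound gives a contribution $\ll H\lambda$. On each $J_i$ the mean value theorem applied to $F'$, combined with $|F''|\ge h$, yields $|F'(x)|\ge h\lambda$; the restriction of $G/F'$ to $J_i$ remains monotonous, so the already proved first part of the lemma applies and delivers a contribution $\ll H/(h\lambda)$. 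Summing and taking $\lambda=1/\sqrt{h}$ to balance the two terms produces $|I|\ll H/\sqrt{h}$.

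The only mildly delicate point is the clean verification that the first--derivative test is applicable on $J_1$ and $J_2$, which amounts to checking that monotonicity of $G/F'$ is preserved under restriction (immediate) and that $|F'|\ge h\lambda$ holds throughout each subinterval (immediate from $F''$ having constant sign together with the definition of $\xi$). Everything else is routine and the optimal choice $\lambda=h^{-1/2}$ is the only genuine calculation.
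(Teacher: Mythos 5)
Your proof is correct, and the paper itself offers no argument here beyond a citation to Titchmarsh (p.~71), where essentially your argument appears: the first-derivative bound via the monotonicity of $G/F'$ (Titchmarsh uses the second mean value theorem where you use Riemann--Stieltjes integration by parts against the constant-sign measure $d(G/F')$ --- the two are equivalent), and the second-derivative bound by excising a neighbourhood of length $\lambda$ around the minimum of $|F'|$, applying the first bound outside it, and optimizing with $\lambda=h^{-1/2}$. No gaps; this is the standard van der Corput argument.
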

\begin{proof} See (\cite{Titchmarsh}, p. 71).
\end{proof}

\begin{lemma}\label{Squareout}
For any complex numbers $a(n)$ we have
\begin{equation*}
\bigg|\sum_{a<n\le b}a(n)\bigg|^2
\leq\bigg(1+\frac{b-a}{Q}\bigg)\sum_{|q|\leq Q}\bigg(1-\frac{|q|}{Q}\bigg)
\sum_{a<n,\, n+q\leq b}a(n+q)\overline{a(n)},
\end{equation*}
where $Q\geq1$.
\end{lemma}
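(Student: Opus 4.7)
The plan is to establish this as the standard van der Corput $A$-process / Gallagher-type squaring-out inequality: shift the sum, apply Cauchy--Schwarz, and recognize the resulting diagonal double sum as a Fej\'er-type kernel. First, for each integer $m$ with $1 \le m \le Q$, the substitution $n \mapsto n'+m$ gives
\[
\sum_{a < n \le b} a(n) \;=\; \sum_{a-m < n' \le b-m} a(n'+m).
\]
Summing over $m$ and interchanging the order of summation, I would obtain
\[
Q \sum_{a<n\le b} a(n) \;=\; \sum_{a-Q+1 \le n' \le b-1} h(n'), \qquad h(n') := \sum_{\substack{1\le m\le Q\\ a \,<\, n'+m \,\le\, b}} a(n'+m),
\]
so that the outer sum ranges over exactly $b-a+Q-1$ integers $n'$.

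Next, Cauchy--Schwarz applied to the sum over $n'$ gives
\[
Q^2 \bigg|\sum_{a<n\le b} a(n)\bigg|^2 \;\le\; (b-a+Q-1)\sum_{n'} |h(n')|^2.
\]
I would then expand $|h(n')|^2$ as a double sum over $m_1, m_2 \in \{1,\dots,Q\}$, substitute $j = n' + m_2$ and $q = m_1 - m_2$, and swap the orders of summation. The number of pairs $(m_1, m_2) \in \{1,\dots,Q\}^2$ with $m_1 - m_2 = q$ equals $Q - |q|$ (the Fej\'er coefficient), while the constraints on $n', m_1, m_2$ reduce to $a < j \le b$ and $a < j + q \le b$. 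This yields
\[
\sum_{n'} |h(n')|^2 \;=\; \sum_{|q|\le Q} (Q - |q|)\sum_{\substack{a \,<\, j,\; j+q \,\le\, b}} a(j+q)\,\overline{a(j)},
\]
with the $|q|=Q$ term contributing zero. Dividing by $Q^2$ and using $(b-a+Q-1)/Q \le 1 + (b-a)/Q$ then yields the claimed bound.

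The only delicate point is the careful bookkeeping of index ranges when interchanging summations and performing the change of variables; once this is carried out, the Fej\'er-kernel weight $Q - |q|$ emerges automatically from counting integer pairs with a fixed difference. No external input is required---the argument is purely Cauchy--Schwarz followed by an elementary rearrangement, so I expect no substantive obstacle.
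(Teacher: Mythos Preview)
Your argument is correct and is precisely the standard Weyl--van der Corput shifting proof of this inequality. The paper does not supply its own proof but simply cites Iwaniec--Kowalski, \emph{Analytic Number Theory}, Lemma~8.17, where exactly this argument is carried out.
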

\begin{proof}
See (\cite{Iwaniec-Kowalski}, Lemma 8.17).
\end{proof}

\begin{lemma}\label{Heath-Brown} Let $3 < U < V < Z < X$ and suppose that $Z -\frac{1}{2}\in\mathbb{N}$,
$X\gg Z^2U$, $Z \gg U^2$, $V^3\gg X$.
Assume further that $F(n)$ is a complex valued function such that $|F(n)| \leq 1$.
Then the sum
\begin{equation*}
\sum\limits_{n\sim X}\Lambda(n)F(n)
\end{equation*}
can be decomposed into $O\Big(\log^{10}X\Big)$ sums, each of which is either of Type I
\begin{equation*}
\sum\limits_{m\sim M}a(m)\sum\limits_{l\sim L}F(ml)\,,
\end{equation*}
where
\begin{equation*}
L \gg Z\,, \quad  LM\asymp X\,, \quad |a(m)|\ll m^\eta\,,
\end{equation*}
or of Type II
\begin{equation*}
\sum\limits_{m\sim M}a(m)\sum\limits_{l\sim L}b(l)F(ml)\,,
\end{equation*}
where
\begin{equation*}
U \ll L \ll V\,, \quad  LM\asymp X\,, \quad |a(m)|\ll m^\eta\,,\quad |b(l)|\ll l^\eta\,.
\end{equation*}
\end{lemma}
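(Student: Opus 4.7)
The plan is to derive this decomposition from Heath-Brown's identity in the standard fashion.

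First I would apply Heath-Brown's identity: for $n\le 2X$ and any positive integer $K$,
\[
\Lambda(n)=-\sum_{j=1}^{K}(-1)^j\binom{K}{j}\!\!\sum_{\substack{m_1\cdots m_j n_1\cdots n_j=n\\ m_i\le(2X)^{1/K}}}\!\!\mu(m_1)\cdots\mu(m_j)\log n_1,
\]
choosing $K$ so that $(2X)^{1/K}\le U$. The hypotheses $X\gg Z^2U$ and $Z\gg U^2$ imply $X\gg U^5$, so an absolute constant such as $K=5$ suffices, and after insertion into $\sum_{n\sim X}\Lambda(n)F(n)$ we obtain $O(1)$ convolutions in at most $2K=10$ variables.

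Next I would perform a dyadic dissection of each of the $2j$ variables, writing $m_i\in(M_i/2,M_i]$ and $n_i\in(N_i/2,N_i]$. This inflates the number of sums by a factor of $O(\log^{2K}X)=O(\log^{10}X)$, matching the bound in the statement. Within each dyadic block we have $\prod M_i\prod N_i\asymp X$, with the restriction $M_i\le U$ on the $\mu$-variables and no size restriction on the $n_i$.

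The combinatorial core is then to group the variables in each dyadic block into two clusters producing either a Type I or a Type II bilinear form. The clean dichotomy is the following. Either one of the $n_i$ (a free variable) already has $N_i\ge Z$, in which case I would peel off that variable as the inner $l$-sum of a Type I form with coefficient $a(m)\ll m^\eta$ coming from the remaining variables (the remaining Dirichlet convolution is bounded in modulus by a divisor function, giving the $m^\eta$ bound). Or else every single dyadic range is $<Z$; then one sorts the sizes and takes a greedy running product, absorbing factors until the threshold $U$ is first exceeded. In the second case, as in Heath-Brown's original argument, this running product $P$ lies in the window $[U,V]$ by virtue of the hypotheses $V^3\gg X$ (which prevents a large residual factor from escaping above $V$) and $Z\gg U^2$ (which bounds the overshoot at each greedy step), so the split produces a Type II form.

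The main obstacle is precisely this combinatorial book-keeping: verifying that in every possible configuration of dyadic sizes $(M_1,\dots,M_j,N_1,\dots,N_j)$ with $M_i\le U$ and product $\asymp X$ failing the Type I hypothesis, the greedy grouping really lands inside the Type II window $[U,V]$. The three conditions $X\gg Z^2U$, $Z\gg U^2$, $V^3\gg X$ interact in a balanced way here, and the technical crux is to make the case analysis exhaustive so that no edge configuration (for instance one in which a single near-$Z$-sized factor would push the running product past $V$) is missed.
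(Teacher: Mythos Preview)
The paper does not prove this lemma at all; it simply cites Heath-Brown's original paper (\cite{Heath}, Lemma~3). Your sketch is essentially an outline of the argument contained in that reference, so at the level of ``approach'' you and the paper are aligned.

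There is, however, a slip in your parameter bookkeeping. You propose to choose $K$ so that $(2X)^{1/K}\le U$ and then argue that $X\gg U^5$ makes $K=5$ admissible. The implication goes the wrong way: $X\gg U^5$ yields $(2X)^{1/5}\gg U$, not $\le U$, and in fact no absolute $K$ can force the Heath--Brown cutoff below $U$ under the stated hypotheses (nothing prevents $X$ from being, say, $U^{100}$). The factor $\log^{10}X$ in the statement does come from $2K=10$ dyadic variables with $K=5$, but the cutoff in the identity is taken to be $X^{1/5}$ (or $(2X)^{1/5}$), not $U$; the $\mu$-carrying variables $m_i$ therefore range up to $X^{1/5}$, which can exceed $U$. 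The combinatorial sorting you describe must accommodate this: in the ``no variable $\ge Z$'' case one orders the dyadic sizes and multiplies them together until the running product first reaches $U$, and it is the hypotheses $Z\gg U^2$ (controlling the overshoot of a single factor below $Z$) together with $V^3\gg X$ and $X\gg Z^2U$ that guarantee the stopping point lands in $[U,V]$. So the three size constraints enter only at the grouping stage, not in the choice of $K$. Once you correct this, your outline matches Heath-Brown's proof.
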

\begin{proof}
See (\cite{Heath}, Lemma 3).
\end{proof}

\begin{lemma}\label{Exponentpairs}
Let $|f^{(m)}(u)|\asymp YX^{1-m}$  for $1\leq X<u<X_0\leq2X$ and $m\geq1$.\\
Then
\begin{equation*}
\bigg|\sum_{X<n\le X_0}e(f(n))\bigg|
\ll Y^\varkappa X^\lambda +Y^{-1},
\end{equation*}
where $(\varkappa, \lambda)$ is any exponent pair.
\end{lemma}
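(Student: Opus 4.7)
The plan is to invoke the theory of exponent pairs as developed via the van der Corput A- and B-processes. Recall that an exponent pair $(\varkappa, \lambda)$ is, by definition, an element of the smallest subset of $\{(\varkappa, \lambda) : 0\le\varkappa\le 1/2\le\lambda\le 1\}$ containing the trivial pair $(0, 1)$ and closed under the two operations
\begin{equation*}
A(\varkappa, \lambda) = \bigg(\frac{\varkappa}{2\varkappa+2},\, \frac{\varkappa + \lambda + 1}{2\varkappa + 2}\bigg), \qquad B(\varkappa, \lambda) = \bigg(\lambda - \frac{1}{2},\, \varkappa + \frac{1}{2}\bigg),
\end{equation*}
each of which preserves the property of yielding a valid bound for smooth exponential sums of the appropriate shape. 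With $F := YX$, the hypothesis $|f^{(m)}(u)| \asymp YX^{1-m}$ becomes the standard smoothness condition $|f^{(m)}(u)| \asymp F\,X^{-m}$ for all $m\ge 1$.

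In the regime $Y \geq 1$, I would prove by induction on the length of a derivation of $(\varkappa, \lambda)$ from $(0, 1)$ that
\begin{equation*}
\bigg|\sum_{X < n \leq X_0} e(f(n))\bigg| \ll \bigg(\frac{F}{X}\bigg)^{\varkappa} X^{\lambda} = Y^{\varkappa} X^{\lambda}.
\end{equation*}
The base case is the trivial bound $\ll X$. The inductive step for the A-process uses Weyl--van der Corput squaring with an optimally chosen shift parameter, combined with the $m$-th derivative test applied to the smoothed inner sum; the B-process step applies Poisson summation and then stationary-phase analysis of the resulting transform, the derivative hypotheses being used to control error terms coming from non-stationary tails.

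In the complementary regime $Y < 1$, the first-derivative hypothesis gives $|f'(u)| \asymp Y < 1$, and the second-derivative hypothesis $|f''(u)| \asymp Y/X$ forces $f'$ to be monotone. Thus the Kuzmin--Landau lemma (or, equivalently, a single integration by parts after detecting the nearest integer via Abel summation) yields
\begin{equation*}
\bigg|\sum_{X < n \leq X_0} e(f(n))\bigg| \ll \frac{1}{\|f'\|} \ll \frac{1}{Y} = Y^{-1}.
\end{equation*}
Combining the two regimes produces the stated estimate $Y^{\varkappa} X^{\lambda} + Y^{-1}$.

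The main obstacle in writing out a self-contained proof would be the careful bookkeeping of the A- and B-process iterations and the verification that the implied constants in the hypothesis $|f^{(m)}(u)| \asymp YX^{1-m}$ propagate through each derivation step without deteriorating. Since this is a completely standard result in the theory of exponential sums (see, for instance, Graham--Kolesnik's monograph or Titchmarsh's book on the zeta-function), I would in practice simply cite the relevant reference rather than reproduce the iterative argument.
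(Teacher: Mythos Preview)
Your proposal is correct and aligns with the paper's own treatment: the paper does not give a self-contained proof but simply cites Graham--Kolesnik, Chapter~3, which is exactly the reference and argument you outline. Your sketch of the $A$/$B$-process induction for $Y\ge 1$ together with the Kuzmin--Landau bound for $Y<1$ is the standard route recorded there, so there is nothing to add.
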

\begin{proof}
See (\cite{Graham-Kolesnik}, Ch. 3).
\end{proof}

\begin{lemma}\label{Baker-Weingartnerest} Let $\theta$, $\lambda$ be real numbers such that
\begin{equation*}
\theta(\theta-1)(\theta-2)\lambda(\lambda-1)(\theta+\lambda-2)(\theta+\lambda-3)
(\theta+2\lambda-3)(2\theta+\lambda-4)\neq0\,.
\end{equation*}
Set
\begin{equation*}
\Sigma_I=\sum\limits_{m\sim M}a_m\sum\limits_{l\in I_m}e(Bm^\lambda l^\theta)\,,
\end{equation*}
where
\begin{equation*}
B>0\,, \quad M\geq1\,, \quad L\geq1\,, \quad |a_m|\leq1\,, \quad I_m\subset(L/2,L]\,.
\end{equation*}
Let
\begin{equation*}
F=BM^\lambda L^\theta\,.
\end{equation*}
Then
\begin{align*}
\Sigma_I\ll\Big(F^{\frac{3}{14}}M^{\frac{41}{56}}L^{\frac{29}{56}}&
+F^{\frac{1}{5}}M^{\frac{3}{4}}L^{\frac{11}{20}}
+F^{\frac{1}{8}}M^{\frac{13}{16}}L^{\frac{11}{16}}\\
&+M^{\frac{3}{4}}L+ML^{\frac{3}{4}}+F^{-1}ML\Big)(ML)^\eta\,.
\end{align*}
\end{lemma}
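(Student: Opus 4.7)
The sum $\Sigma_I$ is a Type II bilinear exponential sum with unknown outer coefficients $a_m$. The plan is to remove the $a_m$ by Cauchy--Schwarz, create additional cancellation on the inner $l$-variable by the Weyl--van der Corput shift of Lemma \ref{Squareout}, and then estimate the resulting clean exponential sum in $m$ by Lemma \ref{Exponentpairs}. The long non-vanishing product in $\theta,\lambda$ appearing in the hypothesis is exactly what keeps every derivative produced along the way from being identically zero, so that the size condition $|f^{(j)}(u)|\asymp YX^{1-j}$ of Lemma \ref{Exponentpairs} is met uniformly in the auxiliary shift parameter.

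First I would dispose of the trivial ranges: if $F\ll 1$, the term $F^{-1}ML$ already absorbs $ML$; if $M$ or $L$ is so small that the refined estimates developed below are weaker than the trivial $\Sigma_I\ll ML$, the terms $M^{3/4}L$ and $ML^{3/4}$ dominate. Thus I may assume $F$, $M$, $L$ are all sufficiently large. Cauchy--Schwarz in the $m$-variable then yields
\begin{equation*}
|\Sigma_I|^{2}\le M\sum_{m\sim M}\Bigl|\sum_{l\in I_m}e(Bm^{\lambda}l^{\theta})\Bigr|^{2}.
\end{equation*}
Next, Lemma \ref{Squareout} applied to the inner sum with a parameter $1\le Q\le L/2$ splits the right-hand side into a diagonal $q=0$ contribution $\ll ML^{2}/Q$ and off-diagonal terms in which, after swapping the order of summation, one meets for each $1\le q\le Q$ and each $l$ the pure exponential sum
\begin{equation*}
\sum_{m\sim M}e\bigl(B\bigl((l+q)^{\theta}-l^{\theta}\bigr)\,m^{\lambda}\bigr).
\end{equation*}
The mean value theorem gives $(l+q)^{\theta}-l^{\theta}\asymp qL^{\theta-1}$, so the phase of the above $m$-sum has $j$-th derivative of size $\asymp (Fq/L)\,M^{1-j}$ for every $j\ge 1$; this is where the hypothesis on $\theta$ and $\lambda$ is used. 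Lemma \ref{Exponentpairs} with a chosen exponent pair $(\varkappa,\mu)$ (I avoid the letter $\lambda$ to prevent a clash with the exponent in $m^{\lambda}$) bounds it by $(Fq/L)^{\varkappa}M^{\mu}+L/(Fq)$.

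Summing over $l\asymp L$ and $q\le Q$ and combining with the diagonal I arrive at
\begin{equation*}
|\Sigma_I|^{2}\ll (ML)^{\eta}\Bigl(\tfrac{M^{2}L^{2}}{Q}+MQ^{1+\varkappa}F^{\varkappa}L^{1-\varkappa}M^{\mu}+\tfrac{ML^{3}}{F}\Bigr).
\end{equation*}
Taking square roots and optimizing $Q\in[1,L/2]$ with three different exponent pairs -- each being the best choice in a distinct range of $(M,L,F)$ -- produces, after routine computation, the three main contributions $F^{3/14}M^{41/56}L^{29/56}$, $F^{1/5}M^{3/4}L^{11/20}$ and $F^{1/8}M^{13/16}L^{11/16}$.

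The main obstacle will be the last step: no single value of $Q$ nor any single exponent pair is optimal in every regime, so the argument has to be split by which candidate for $Q$ actually falls inside $[1,L/2]$ and by which of the three pairs yields the smallest bound in each subcase; trivial bounds pick up the leftover ranges. A secondary technical point is verifying the exponent-pair hypotheses uniformly in $l,q$, which is where the full strength of the non-vanishing polynomial in $\theta,\lambda$ in the statement is consumed: it prevents any of the coefficients produced by successive differentiation from degenerating.
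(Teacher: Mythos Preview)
The paper does not prove this lemma at all; it simply cites Theorem~2 of Baker--Weingartner. So there is nothing to compare your argument against in the present paper, but your plan can be checked against the actual proof in \cite{Baker-Weingartner}, and there it does not hold up.

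Your outline is the standard $A$-process treatment of a bilinear sum: Cauchy--Schwarz in $m$, Weyl shift in $l$, then an exponent pair on the resulting $m$-sum. Carrying this out carefully (with the $(1+L/Q)$ factor from Lemma~\ref{Squareout} and with $Y=Fq/(LM)$ in Lemma~\ref{Exponentpairs}) one finds, after balancing the diagonal against the off-diagonal,
\[
|\Sigma_I|^{2}\ll (ML)^{\eta}\,F^{\varkappa/(1+\varkappa)}M^{\,2-(1-\mu+\varkappa)/(1+\varkappa)}L^{\,2-\varkappa/(1+\varkappa)}+\text{(secondary terms)}.
\]
For this to reproduce, say, the term $F^{1/8}M^{13/16}L^{11/16}$ one would need $\varkappa/(1+\varkappa)=1/4$, i.e.\ $\varkappa=1/3$, but then the $L$-exponent becomes $1-\varkappa/(2(1+\varkappa))=7/8\neq 11/16$. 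The same mismatch occurs for the other two principal terms $F^{3/14}M^{41/56}L^{29/56}$ and $F^{1/5}M^{3/4}L^{11/20}$: no admissible exponent pair $(\varkappa,\mu)$ makes both the $F$- and the $L$-exponents come out right. The ``routine computation'' you defer to is therefore not routine; your scheme, as stated, cannot produce these exponents. (Incidentally, your displayed bound already has the wrong power of $Q$ in the middle term and the wrong shape in the third, which is a symptom of the same miscount.)

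The reason is structural. The Baker--Weingartner bound exploits that the inner $l$-sum is \emph{smooth}: they apply the $B$-process (Poisson summation) to $\sum_{l}e(Bm^{\lambda}l^{\theta})$ first, converting it to a sum over a new frequency variable, and only afterwards bring in Cauchy--Schwarz and a further van~der~Corput step. This extra Poisson step is what generates the asymmetry between the $M$- and $L$-exponents and what makes the non-vanishing of quantities like $2\theta+\lambda-4$ and $\theta+2\lambda-3$ relevant (these arise from stationary-phase derivatives after the $B$-process, not from Weyl differencing). Your approach, by Cauchy--Schwarzing away the smoothness of the $l$-sum at the outset, throws away exactly the feature that Baker--Weingartner exploit, and it would also never need most of the factors in the hypothesis on $\theta,\lambda$.
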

\begin{proof}
See (\cite{Baker-Weingartner}, Theorem 2).
\end{proof}

\begin{lemma}\label{Sargos-Wuest} Let  $\alpha$, $\beta$ be real numbers such that
\begin{equation*}
\alpha\beta(\alpha-1)(\beta-1)(\alpha-2)(\beta-2)\neq0\,.
\end{equation*}
Set
\begin{equation*}
\Sigma_{II}=\sum\limits_{m\sim M}a(m)
\sum\limits_{l\sim L}b(l)e\left(F\frac{m^\alpha l^\beta}{M^\alpha L^\beta}\right)\,,
\end{equation*}
where
\begin{equation*}
F>0\,, \quad M\geq1\,, \quad L\geq1\,, \quad |a(m)|\leq1\,, \quad|b(l)|\leq1\,.
\end{equation*}
Then
\begin{align*}
\Sigma_{II}(FML)^{-\eta}&\ll(F^4M^{31}L^{34})^{\frac{1}{42}}+(F^6M^{53}L^{51})^{\frac{1}{66}}+(F^6M^{46}L^{41})^{\frac{1}{56}}\\
&+(F^2M^{38}L^{29})^{\frac{1}{40}}+(F^3M^{43}L^{32})^{\frac{1}{46}}+(FM^9L^6)^{\frac{1}{10}}\\
&+(F^2M^7L^6)^{\frac{1}{10}}+(FM^6L^6)^{\frac{1}{8}}+M^{\frac{1}{2}}L\\
&+ML^{\frac{1}{2}}+F^{-\frac{1}{2}}ML\,.
\end{align*}
\end{lemma}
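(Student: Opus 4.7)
This lemma is a Sargos--Wu / Robert--Sargos type estimate for bilinear (``Type II'') exponential sums with smooth phase, and the plan is to prove it by the same double large sieve combined with iterated van der Corput arguments that underlie Lemma \ref{Baker-Weingartnerest}. The first step is to remove both coefficients $a(m)$ and $b(l)$: Cauchy--Schwarz in $m$ followed by the squaring-out inequality of Lemma \ref{Squareout} (with a free shift parameter $Q\leq M$) converts $\Sigma_{II}$ into an average over shifted sums, and a symmetric second squaring in $l$ (with shift parameter $R\leq L$) reduces matters to controlling double exponential sums of the form
\begin{equation*}
\sum_{|q|\leq Q}\sum_{|r|\leq R}\Bigl|\sum_{m\sim M}\sum_{l\sim L}e\bigl(F\,\Phi_{q,r}(m,l)\bigr)\Bigr|,
\end{equation*}
where $\Phi_{q,r}$ is smooth and its mixed partial derivatives are explicit functions of $\alpha,\beta,q,r$; the nondegeneracy hypothesis $\alpha\beta(\alpha-1)(\beta-1)(\alpha-2)(\beta-2)\neq 0$ guarantees that no relevant derivative of $\Phi_{q,r}$ vanishes identically.

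Second, for each fixed pair $(q,r)$ I would attack the resulting smooth double sum by alternating the van der Corput A-process (Weyl differencing in one of the variables) and the B-process (Poisson summation with stationary-phase evaluation), finishing by Lemma \ref{Exponentpairs} applied with an appropriately chosen exponent pair $(\varkappa,\lambda)$. Each prescribed sequence of A/B operations, together with a choice of exponent pair, produces one of the bounds in the conclusion. The fractional denominators $42$, $66$, $56$, $40$, $46$ arise from compound processes such as $AAB$, $ABA$, $AABB$ on the smooth sum with specific pairs $(\varkappa,\lambda)$; the $1/10$ and $1/8$ terms correspond to a single $AB$ or $B$ application; and the three ``linear'' terms $M^{1/2}L$, $ML^{1/2}$, $F^{-1/2}ML$ are the diagonal contributions coming from the two squaring steps ($l=l'$ and $m=m'$) and from the trivial first-derivative test, respectively.

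Finally one takes the maximum of all the bounds so produced and optimises over the free parameters $Q$ and $R$; the surviving terms are precisely those listed. The main obstacle is bookkeeping rather than new ideas: one must propagate, through several consecutive shifts, the allowed ranges of the auxiliary parameters and check at each stage that the hypothesis on $\alpha,\beta$ keeps the relevant van der Corput process applicable (for instance, that after each A-step the resulting phase still has a nonzero second derivative in the chosen variable). Because the stated list of terms already forms a Pareto frontier of bounds obtainable by this family of methods, the lemma can then be applied as a black box to the Type II sums that arise in the treatment of $\sum_{n\sim X}\Lambda(n)e(tn^c)\cdot(\text{Linnik weight})$ via the Heath-Brown decomposition of Lemma \ref{Heath-Brown}.
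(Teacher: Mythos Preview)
The paper does not prove this lemma at all: its entire ``proof'' is the one-line citation ``See (\cite{Sargos-Wu}, Theorem 9).'' So there is no argument in the paper to compare your proposal against; the lemma is quoted as a black box from Sargos and Wu.

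Your sketch is a reasonable high-level description of the machinery behind results of this type (double Weyl differencing to strip the coefficients, then iterated $A$/$B$ processes on the resulting smooth sum, optimised over the shift parameters), and the nondegeneracy condition on $\alpha,\beta$ is indeed what keeps the derivatives from collapsing. But as written it is only a plan, not a proof: you have not specified which sequences of $A$/$B$ steps and which exponent pairs actually produce the eleven listed terms with those precise exponents, nor carried out the optimisation in $Q,R$. Reproducing the exact list is genuinely delicate bookkeeping and is the content of the Sargos--Wu paper. For the purposes of this paper, the correct move is simply to cite \cite{Sargos-Wu}, Theorem~9, as the author does.
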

\begin{proof}
See (\cite{Sargos-Wu}, Theorem  9).
\end{proof}
The next two lemmas are due to C. Hooley.
\begin{lemma}\label{Hooley1}
For any constant $\omega>0$ we have
\begin{equation*}
\sum\limits_{p\leq X}
\bigg|\sum\limits_{d|p-1\atop{\sqrt{X}(\log X)^{-\omega}<d<\sqrt{X}(\log X)^{\omega}}}
\chi_4(d)\bigg|^2\ll \frac{X(\log\log X)^7}{\log X}\,,
\end{equation*}
where the constant in Vinogradov's symbol depends on $\omega>0$.
\end{lemma}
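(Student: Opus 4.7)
The plan is to open the square, reduce to counting primes in arithmetic progressions, and apply the Bombieri--Vinogradov theorem; the subtle point is the evaluation of the main term. Writing $\mathcal{D}=\bigl(\sqrt{X}(\log X)^{-\omega},\sqrt{X}(\log X)^{\omega}\bigr)$ and expanding the inner modulus gives
\[
\sum_{p\le X}\bigg|\sum_{\substack{d\mid p-1\\ d\in\mathcal{D}}}\chi_4(d)\bigg|^2
=\sum_{d_1,d_2\in\mathcal{D}}\chi_4(d_1)\overline{\chi_4(d_2)}\,\pi(X;[d_1,d_2],1).
\]
The arithmetic observation I would exploit is that $\pi(X;q,1)$ is empty once $q>X-1$; writing $\delta=(d_1,d_2)$ and $d_i=\delta e_i$ with $(e_1,e_2)=1$ makes this quantitative, since $[d_1,d_2]=\delta e_1 e_2$ forces off-diagonal pairs with small $\delta$ to contribute only $O(1)$ primes apiece. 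The bulk of the mass therefore concentrates near the diagonal $d_1=d_2$.

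Next, I would split $\pi(X;q,1)=\mathrm{li}(X)/\varphi(q)+R(X,q)$ into a main term and an error term. The error, after grouping pairs by their lcm $q$ and absorbing the divisor-counting weight $\tau(q)$, is dominated by $\sum_{q\le X}\tau(q)\max_{(a,q)=1}|R(X,q)|$, which by Cauchy--Schwarz combined with the classical Bombieri--Vinogradov theorem (applied up to level $X^{1/2}(\log X)^{-B}$ for a sufficiently large $B$) is $\ll X(\log X)^{-A}$ for any fixed $A>0$. This bound is far smaller than the target.

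The main term
\[
\frac{X}{\log X}\sum_{d_1,d_2\in\mathcal{D}}\frac{\chi_4(d_1)\overline{\chi_4(d_2)}}{\varphi([d_1,d_2])}
\]
is the heart of the argument. Substituting $d_i=\delta e_i$ and using the multiplicativity of $\chi_4$ and $\varphi$ separates this into a sum over $\delta$ of size close to $\sqrt{X}$, weighted by $1/\varphi(\delta)$, times a short double sum over coprime pairs $(e_1,e_2)$. The diagonal $e_1=e_2=1$ alone produces a contribution of order $X\log\log X/\log X$ via Mertens' estimate for $\sum 1/\varphi(\delta)$ on a logarithmic annulus; the off-diagonal requires exploiting the partial cancellation of $\chi_4$ on the residue classes carved out by $(e_1,e_2)$. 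I expect extracting exactly the exponent seven on $\log\log X$, which parallels Linnik's original analysis of $\sum_{p\le X}r(p-1)$ and is precisely the source of the $\theta_0$ appearing in Theorem \ref{Theorem}, to be the main obstacle; by contrast, the reduction to primes in progressions and the Bombieri--Vinogradov step are essentially standard.
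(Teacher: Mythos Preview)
The paper does not prove this lemma; it attributes the result to Hooley and refers the reader to Chapter~5 of \cite{Hooley}. So the relevant comparison is between your sketch and Hooley's argument, and there is a genuine obstruction in your plan.

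The step you describe as ``essentially standard'' is precisely the one that fails. After opening the square, every modulus that appears satisfies
\[
q=[d_1,d_2]\ge\max(d_1,d_2)>\sqrt{X}(\log X)^{-\omega},
\]
so for any $B>\omega$ \emph{all} of the moduli already exceed the Bombieri--Vinogradov level $\sqrt{X}(\log X)^{-B}$, and in fact most of them lie between $\sqrt{X}$ and $X$. Hence the claim that
\[
\sum_{q\le X}\tau(q)\max_{(a,q)=1}|R(X,q)|\ll \frac{X}{(\log X)^{A}}
\]
is unsupported: the classical Bombieri--Vinogradov theorem says nothing in this range, and Cauchy--Schwarz against the Barban--Davenport--Halberstam variance only yields a bound of rough order $X^{3/2}$. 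Brun--Titchmarsh is likewise too blunt, since for $q$ close to $X$ the factor $\log(2X/q)$ degenerates. In short, no ``main term plus small remainder'' decomposition of $\pi(X;q,1)$ is available here, so the subsequent analysis of the main-term sum over $\varphi([d_1,d_2])^{-1}$ never gets off the ground.

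Hooley's route is quite different: he does not reduce to primes in progressions to moduli of size $\asymp X$. The argument in \cite{Hooley}, Chapter~5, instead exploits the multiplicativity of $\chi_4$ together with the anatomy of integers possessing a divisor in a short multiplicative interval --- the same circle of ideas (going back to Erd\H{o}s's multiplication-table problem) that underlies Lemma~\ref{Hooley2}. The power $(\log\log X)^{7}$ is an artefact of that combinatorial analysis, not of a main-term computation.
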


\begin{lemma}\label{Hooley2} Suppose that $\omega>0$ is a constant
and let $\mathcal{F}_\omega(X)$ be the number of primes $p\leq X$
such that $p-1$ has a divisor in the interval $\big(\sqrt{X}(\log X)^{-\omega}, \sqrt{X}(\log X)^\omega\big)$.
Then
\begin{equation*}
\mathcal{F}_\omega(X)\ll\frac{X(\log\log X)^3}{(\log X)^{1+2\theta_0}}\,,
\end{equation*}
where $\theta_0$ is defined by \eqref{theta0} and the constant in
Vinogradov's symbol depends only on $\omega>0$.
\end{lemma}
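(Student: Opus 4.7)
The plan is to follow Hooley's original approach, which combines the Brun--Titchmarsh inequality (to control the primality of $p$) with the Erd\H{o}s--Ford--Tenenbaum theory on the density of integers possessing a divisor in a narrow range near $\sqrt{n}$, the source of the constant $\theta_0$.

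First, I would perform a dyadic decomposition: partition the interval $\bigl(\sqrt{X}(\log X)^{-\omega},\, \sqrt{X}(\log X)^{\omega}\bigr)$ into $O(\omega\log\log X)$ subintervals of the form $(D, 2D]$ with $D\asymp\sqrt{X}$. It then suffices to prove, for each such $D$,
\[
\#\{p \le X : \exists\, d \mid p-1,\; d \in (D, 2D]\} \ll \frac{X(\log\log X)^{2}}{(\log X)^{1+2\theta_0}},
\]
and sum over the $O(\log\log X)$ dyadic ranges. For fixed $D$, each prime $p$ in this set corresponds to a factorisation $p-1=dm$ with $d\in(D,2D]$ and a complementary factor $m\asymp X/D\asymp\sqrt{X}$, so the count can be written as $\sum_{d\in(D,2D]}\#\{m\asymp X/D:dm+1\le X\text{ is prime}\}$.

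The crux of the argument, and the origin of the doubled exponent $2\theta_0$, is that the event ``$p-1$ has a divisor in a narrow interval about $\sqrt{X}$'' is an extremely restrictive multiplicative constraint: the multiset of logarithms of the prime factors of $p-1$ must admit a subset summing to within $O(\log\log X)$ of $\tfrac{1}{2}\log X$. For a generic integer $n\le X$ this density is $\asymp(\log X)^{-\theta_0}(\log\log X)^{3/2}$ by Ford's refinement of Erd\H{o}s's bound. The additional saving $(\log X)^{-\theta_0}$ in the prime setting is produced by combining Selberg's upper-bound sieve (applied to the linear form $dm+1$) with a bilinear argument that couples the Ford--Tenenbaum subset-sum structure on $d$ with the analogous structure on $m=(p-1)/d$: both factors must independently satisfy the narrow-range divisor condition, and the two constraints act multiplicatively when one imposes the Brun--Titchmarsh bound $\pi(X;d,1)\ll X/(\varphi(d)\log(X/d))$ with weights encoding these structural requirements. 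Summing the resulting weighted bound over $d\in(D,2D]$, executing the combinatorial estimate for the weight, and then summing over dyadic ranges yields the claimed $X(\log\log X)^{3}/(\log X)^{1+2\theta_0}$.

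The main obstacle is carrying out this bilinear refinement rigorously, i.e.\ extracting the second factor of $(\log X)^{-\theta_0}$ beyond what a naive substitution of Brun--Titchmarsh into the Erd\H{o}s--Ford density estimate would give. This demands the delicate combinatorial bookkeeping of prime factors of $d$ and $m$ near $\sqrt{X}$ that constitutes the technical heart of Hooley's treatment of divisors of shifted primes; it is also the place where the precise shape of $\theta_0=\tfrac12-\tfrac14e\log 2$ arises, through the Erd\H{o}s-type analysis of subset sums of $\log p_i$'s that cluster near $\tfrac12\log X$.
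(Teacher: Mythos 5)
The paper does not actually prove this lemma; it only cites Hooley's monograph (Ch.~5), so the comparison must be with Hooley's argument. Your sketch begins correctly (dyadic decomposition, the factorisation $p-1=dm$ with $d\asymp m\asymp\sqrt{X}$, Brun--Titchmarsh), but the step you identify as the crux misattributes the origin of the exponent $2\theta_0$, and the mechanism you describe would not produce it. The density of \emph{generic} integers $n\le X$ possessing a divisor in a dyadic interval near $\sqrt{X}$ is governed by the Erd\H{o}s--Tenenbaum--Ford constant $\delta=1-(1+\log\log 2)/\log 2=0.086\ldots$, not by $\theta_0=0.0289\ldots$; there is no generic density $(\log X)^{-\theta_0}$ that then gets ``doubled'' by coupling independent divisor conditions on $d$ and on $m=(p-1)/d$. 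Indeed $2\theta_0<\delta$, so the shifted-prime bound saves \emph{less} than the integer count, which is incompatible with your picture of an ``additional saving'' in the prime setting; moreover, once $d$ lies in the window, $m$ automatically lies in the reflected window, so there is no second, independent constraint to exploit multiplicatively.

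The mechanism that actually yields $\theta_0=\tfrac12-\tfrac14e\log 2$ (going back to Linnik and reproduced by Hooley) is a single optimisation over the number of prime factors. Fix $K=\beta\log\log X$. Either $\Omega(p-1)>K$, and the number of such $p\le X$ is $\ll X(\log X)^{\beta(1-\log\beta)-2}(\log\log X)^{O(1)}$ by the Hardy--Ramanujan/Erd\H{o}s-type sieve bound for the number of prime factors of $p-1$; or $\Omega(p-1)\le K$, in which case one of $d$, $(p-1)/d$ lies in the window and has at most $K/2$ prime factors, whence Brun--Titchmarsh gives
\begin{equation*}
\sum_{\substack{d\in I\\ \Omega(d)\le K/2}}\pi(X;d,1)
\ll\frac{X\log\log X}{\log^2X}\sum_{r\le K/2}\frac{(\log\log X)^{r-1}}{(r-1)!}
\ll X(\log X)^{\frac{\beta}{2}\left(1-\log\frac{\beta}{2}\right)-2}(\log\log X)^{O(1)}\,.
\end{equation*}
Balancing the two exponents forces $\beta=e/2$ and gives $(\log X)^{\frac{e}{2}\log 2-2}=(\log X)^{-1-2\theta_0}$, with the power $(\log\log X)^{3}$ coming from careful bookkeeping. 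Without this split on $\Omega(p-1)$ and the threshold $\tfrac{e}{2}\log\log X$, the Brun--Titchmarsh sum over all $d$ in the window gives only $O(X\log\log X/\log X)$; so your proposal, as written, has a genuine gap precisely at the step that is supposed to produce the saving, and the route you indicate for filling it (a bilinear ``independent divisor condition'' argument based on Ford's density theorem) is not the right one.
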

The proofs of very similar results are available in (\cite{Hooley}, Ch.5).

\begin{lemma}\label{IIIIest} We have
\begin{equation*}
\int\limits_{-\infty}^{\infty} I^4(t)\Theta(t)e(-Nt)\,dt\gg \varepsilon X^{4-c}\,.
\end{equation*}
\end{lemma}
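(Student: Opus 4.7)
The plan is to apply Fourier inversion to convert the oscillatory $t$-integral into a four-dimensional volume, and then estimate that volume by an elementary change of variables. Writing out
$$I^4(t) = \int_{X/2}^{X}\!\!\cdots\!\!\int_{X/2}^{X} e\bigl(t(y_1^c+y_2^c+y_3^c+y_4^c)\bigr)\,dy_1\cdots dy_4,$$
swapping the $t$-integral with the $y$-integrals, and using that Fourier inversion applied to Lemma~\ref{Fourier} gives $\int_{-\infty}^{\infty} \Theta(t)\,e(tu)\,dt = \theta(u)$ for every real $u$, the choice $u = y_1^c + y_2^c + y_3^c + y_4^c - N$ yields
$$\int_{-\infty}^{\infty} I^4(t)\,\Theta(t)\,e(-Nt)\,dt = \int_{X/2}^{X}\!\!\cdots\!\!\int_{X/2}^{X} \theta\bigl(y_1^c+y_2^c+y_3^c+y_4^c-N\bigr)\,dy_1\cdots dy_4.$$

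Since $\theta\ge 0$ everywhere and $\theta(u)=1$ whenever $|u|\le a-\delta = 4\varepsilon/5$, the right-hand side is bounded below by the Lebesgue measure of
$$\mathcal{R} = \bigl\{(y_1,y_2,y_3,y_4)\in(X/2,X]^4 : |y_1^c+y_2^c+y_3^c+y_4^c-N| < 4\varepsilon/5\bigr\}.$$
To estimate $\mathrm{vol}(\mathcal{R})$ from below, I would fix $(y_1,y_2,y_3)$ in a closed subcube $\mathcal{K}\subset (X/2,X]^3$ of measure $\asymp X^3$ chosen so that $N - y_1^c - y_2^c - y_3^c$ stays in a compact subinterval of $((X/2)^c,X^c)$ uniformly on $\mathcal{K}$. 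Recalling $X=(N/3)^{1/c}$, such a $\mathcal{K}$ exists because the balance point $y_1=y_2=y_3=y_4=(N/4)^{1/c}$ lies strictly inside the open cube $(X/2,X)^4$ for all $c$ in our range. For each such triple the substitution $v=y_4^c$, whose Jacobian is $cy_4^{c-1}\asymp X^{c-1}$, shows that the admissible $y_4\in(X/2,X]$ form a one-dimensional set of measure $\gg \varepsilon/X^{c-1}$. Multiplying, $\mathrm{vol}(\mathcal{R}) \gg X^3\cdot \varepsilon/X^{c-1} = \varepsilon X^{4-c}$, which is the desired bound.

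There is no serious obstacle here: the proof is essentially a short smoothing argument. The only point worth a moment's care is the verification that the chosen subcube $\mathcal{K}$ can simultaneously be of full-order size and force the resulting $y_4$-interval to lie strictly inside $(X/2,X]$; this is immediate from the fact that the balance point $(N/4)^{1/c}$ is bounded away from both endpoints $X/2$ and $X$ uniformly for $1<c<967/805$.
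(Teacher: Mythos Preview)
Your proposal is correct and is exactly the standard argument for this type of lower bound. The paper itself does not prove the lemma but simply cites \cite{Zhai-Cao}, Lemma~8; the argument there follows the same pattern you outline --- Fourier inversion reduces the integral to $\int\cdots\int \theta(y_1^c+\cdots+y_4^c-N)\,dy_1\cdots dy_4$, which is then bounded below by the volume of the region where the constraint holds exactly, and that volume is computed by the change of variables $y_4\mapsto y_4^c$.
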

\begin{proof}
See (\cite{Zhai-Cao}, Lemma 8).
\end{proof}

\section{Outline of the proof}
\indent

Consider the sum
\begin{equation}\label{Gamma}
\Gamma(X)=
\sum\limits_{X/2<p_1,p_2,p_3,p_4\leq X\atop{|p_1^c+p_2^c+p_3^c+p_4^c-N|<\varepsilon}}
r(p_1-1)\log p_1\log p_2\log p_3\log p_4\,.
\end{equation}
Obviously
\begin{equation}\label{GammaGamma0}
\Gamma(X)\geq\Gamma_0(X)\,,
\end{equation}
where
\begin{equation}\label{Gamma0}
\Gamma_0(X)=\sum\limits_{X/2<p_1,p_2,p_3,p_4\leq X}r(p_1-1)
\theta\big(p_1^c+p_2^c+p_3^c+p_4^c-N\big)\log p_1 \log p_2\log p_3\log p_4\,.
\end{equation}
From \eqref{Gamma0} and well-known identity $r(n)=4\sum_{d|n}\chi_4(d)$ we obtain
\begin{equation} \label{Gamma0decomp}
\Gamma_0(X)=4\big(\Gamma_1(X)+\Gamma_2(X)+\Gamma_3(X)\big),
\end{equation}
where
\begin{align}
\label{Gamma1}
&\Gamma_1(X)=\sum\limits_{X/2<p_1,p_2,p_3,p_4\leq X}
\left(\sum\limits_{d|p_1-1\atop{d\leq D}}\chi_4(d)\right)
\theta\big(p_1^c+p_2^c+p_3^c+p_4^c-N\big)\prod\limits_{k=1}^{4}\log p_k\,,\\
\label{Gamma2}
&\Gamma_2(X)=\sum\limits_{X/2<p_1,p_2,p_3,p_4\leq X}
\left(\sum\limits_{d|p_1-1\atop{D<d<X/D}}\chi_4(d)\right)
\theta\big(p_1^c+p_2^c+p_3^c+p_4^c-N\big)\prod\limits_{k=1}^{4}\log p_k\,,\\
\label{Gamma3}
&\Gamma_3(X)=\sum\limits_{X/2<p_1,p_2,p_3,p_4\leq X}
\left(\sum\limits_{d|p_1-1\atop{d\geq X/D}}\chi_4(d)\right)
\theta\big(p_1^c+p_2^c+p_3^c+p_4^c-N\big)\prod\limits_{k=1}^{4}\log p_k\,.
\end{align}
In order to estimate $\Gamma_1(X)$ and $\Gamma_3(X)$ we need to consider
the sum
\begin{equation} \label{Ild}
I_{l,d;J}(X)=\sum\limits_{X/2<p_2,p_3,p_4\leq X\atop{p_1\equiv l\,(d)
\atop{p_1\in J}}}\theta\big(p_1^c+p_2^c+p_3^c+p_4^c-N\big)
\log p_1\log p_2\log p_3\log p_4\,,
\end{equation}
where $l$ and $d$ are coprime natural numbers, and $J\subset( X/2,X]$-interval.
If $J=(X/2,X]$ then we write for simplicity $I_{l,d}(X)$.

Using the inverse Fourier transform for the function $\theta(y)$ we deduce
\begin{align*}
I_{l,d;J}(X)&=\sum\limits_{X/2<p_2,p_3,p_4\leq X\atop{p_1\equiv l\,(d)\atop{p_1\in J}}}
\log p_1\log p_2\log p_3\log p_4
\int\limits_{-\infty}^{\infty}\Theta(t)e\big((p_1^c+p_2^c+p_3^c+p_4^c-N)t\big)\,dt\\
&=\int\limits_{-\infty}^{\infty}
\Theta(t)S^3(t)S_{l,d;J}(t)e(-Nt)\,dt\,.
\end{align*}
We decompose $I_{l,d;J}(X)$ as follows
\begin{equation}\label{Ilddecomp}
I_{l,d;J}(X)=I^{(1)}_{l,d;J}(X)+I^{(2)}_{l,d;J}(X)+I^{(3)}_{l,d;J}(X)\,,
\end{equation}
where
\begin{align}
\label{Ild1}
&I^{(1)}_{l,d;J}(X)=\int\limits_{-\Delta}^{\Delta}\Theta(t)S^3(t)S_{l,d;J}(t)e(-Nt)\,dt\,,\\
\label{Ild2}
&I^{(2)}_{l,d;J}(X)=\int\limits_{\Delta\leq|t|\leq H}\Theta(t)S^3(t)S_{l,d;J}(t)e(-Nt)\,dt\,,\\
\label{Ild3}
&I^{(3)}_{l,d;J}(X)=\int\limits_{|t|>H}\Theta(t)S^3(t)S_{l,d;J}(t)e(-N t)\,dt\,.
\end{align}
We shall estimate $I^{(1)}_{l,d;J}(X)$, $I^{(3)}_{l,d;J}(X)$,
$\Gamma_3(X),\,\Gamma_2(X)$ and $\Gamma_1(X)$, respectively,
in the sections \ref{SectionIld1}, \ref{SectionIld3},
\ref{SectionGamma3}, \ref{SectionGamma2} and \ref{SectionGamma1}.
In section \ref{Sectionfinal} we shall finalize the proof of Theorem \ref{Theorem}.

\newpage

\section{Asymptotic formula for $\mathbf{I^{(1)}_{l,d;J}(X)}$}\label{SectionIld1}
\indent

Denote
\begin{align}
\label{S1}
&S_1=S(t)\,,\\
\label{S2}
&S_2=S_{l,d;J}(t)\,,\\
\label{I1}
&I_1=I(t)\,,\\
\label{I2}
&I_2=\frac{I_J(t)}{\varphi(d)}\,.
\end{align}
We use the identity
\begin{equation}\label{Identity}
S^3_1S_2=I^3_1I_2+(S_2-I_2)I_1^3+S_2(S_1-I_1)I_1^2
+S_1S_2(S_1-I_1)I_1+S^2_1S_2(S_1-I_1)\,.
\end{equation}
We also need the trivial estimations
\begin{equation}\label{trivial estimations}
S_1\ll X \,, \quad S_2\ll\frac{X\log X}{d} \,, \quad  I_1\ll X\,.
\end{equation}
Put
\begin{align}
\label{PhiDeltaJX}
&\Phi_{\Delta, J}(X, d)=\frac{1}{\varphi(d)}
\int\limits_{-\Delta}^{\Delta}\Theta(t)I^3(t)I_J(t)e(-Nt)\,dt\,,\\
\label{PhiJX}
&\Phi_J(X, d)=\frac{1}{\varphi(d)}\int\limits_{-\infty}^{\infty}\Theta(t)I^3(t)I_J(t)e(-Nt)\,dt\,.
\end{align}
Now  \eqref{SalphaX}, \eqref{IalphaX}, \eqref{Ild1}, \eqref{S1} -- \eqref{PhiDeltaJX},
Lemma \ref{Fourier}, Lemma \ref{SIasympt} and Lemma \ref{intLintI} imply
\begin{align*}
I^{(1)}_{l,d;J}(X)-\Phi_{\Delta, J}(X, d)
&=\int\limits_{-\Delta}^{\Delta}\Theta(t)\Bigg(S_{l,d;J}(t)-\frac{I_J(t)}{\varphi(d)}\Bigg)
I^3(t)e(-N t)\,dt\nonumber\\
&+\int\limits_{-\Delta}^{\Delta}\Theta(t)S_{l,d;J}(t)
\Big(S(t)-I(t)\Big)I^2(t)e(-Nt)\,dt\nonumber\\
&+\int\limits_{-\Delta}^{\Delta}\Theta(t)S(t)S_{l,d;J}(t)
\Big(S(t)-I(t)\Big)I(t)e(-Nt)\,dt\nonumber\\
&+\int\limits_{-\Delta}^{\Delta}\Theta(t)S^2(t)S_{l,d;J}(t)
\Big(S(t)-I(t)\Big)e(-Nt)\,dt\nonumber\\
\end{align*}

\begin{align}\label{Ild1-PhiDeltaJX}
&\ll\varepsilon  \Bigg( X\max\limits_{|t|\leq\Delta}\bigg|S_{l,d;J}(t)-\frac{I_J(t)}{\varphi(d)}\bigg|
\int\limits_{-\Delta}^{\Delta}|I(t)|^2\,dt\nonumber\\
&+\frac{X^2\log X}{de^{(\log X)^{1/5}}}
\int\limits_{-\Delta}^{\Delta}|I(t)|^2\,dt+\frac{X^2\log X}{de^{(\log X)^{1/5}}}
\int\limits_{-\Delta}^{\Delta}|S(t)||I(t)|\,dt\nonumber\\
&+\frac{X^2\log X}{de^{(\log X)^{1/5}}} \int\limits_{-\Delta}^{\Delta}|S(t)|^2\,dt\Bigg)\nonumber\\
&\ll\varepsilon\Bigg(X^{3-c}(\log X)\max\limits_{|t|\leq\Delta}
\bigg|S_{l,d;J}(t)-\frac{I_J(t)}{\varphi(d)}\bigg|
+\frac{X^{4-c}}{de^{(\log X)^{1/6}}}\Bigg)\,.
\end{align}
Using  \eqref{IJalphaX}, \eqref{IalphaX} and Lemma \ref{IestTitchmarsh} we deduce
\begin{equation}\label{IalphaXest}
I_J(t)\ll \min \left(X, \,\frac{X^{1-c}}{|t|}\right)\,,  \quad
I(t)\ll \min \left(X, \,\frac{X^{1-c}}{|t|}\right)\,.
\end{equation}
From \eqref{IJalphaX}, \eqref{IalphaX}, \eqref{PhiDeltaJX}, \eqref{PhiJX}, \eqref{IalphaXest}
and Lemma \ref{Fourier} it follows
\begin{equation*}
\Phi_{\Delta, J}(X, d)-\Phi_J(X, d)\ll \frac{1}{\varphi(d)} \int\limits_{\Delta}^{\infty} |I(t)|^3|I_J(t)| |\Theta(t)|\,dt
\ll \varepsilon\frac{ X^{4-4c}}{\varphi(d)} \int\limits_{\Delta}^{\infty}  \frac{dt}{t^4}
\ll \frac{\varepsilon X^{4-4c}}{\varphi(d)\Delta^3}
\end{equation*}
and therefore
\begin{equation}\label{JXest1}
\Phi_{\Delta, J}(X, d)=\Phi_J(X, d)
+\mathcal{O}\left(\frac{\varepsilon X^{4-4c}}{\varphi(d)\Delta^3}\right)\,.
\end{equation}
Finally  \eqref{Delta}, \eqref{Ild1-PhiDeltaJX}, \eqref{JXest1} and the identity
\begin{equation*}
I^{(1)}_{l,d;J}(X)=I^{(1)}_{l,d;J}(X)-\Phi_{\Delta, J}(X, d)
+\Phi_{\Delta, J}(X, d)-\Phi_J(X, d)+\Phi_J(X, d)
\end{equation*}
yield
\begin{equation}\label{Ild1est}
I^{(1)}_{l,d;J}(X)=\Phi_J(X, d)
+\mathcal{O}\Bigg(\varepsilon X^{3-c}
(\log X)\max\limits_{|t|\leq\Delta}\bigg|S_{l,d;J}(t)-\frac{I_J(t)}{\varphi(d)}\bigg|\Bigg)
+\mathcal{O}\bigg(\frac{\varepsilon X^{4-c}}{de^{(\log X)^{1/6}}}\bigg)\Bigg)\,.
\end{equation}

\section{Upper bound  of $\mathbf{I^{(3)}_{l,d;J}(X)}$}\label{SectionIld3}
\indent

By \eqref{H}, \eqref{SldalphaX}, \eqref{SalphaX},
\eqref{Ild3} and Lemma \ref{Fourier} we find
\begin{equation}\label{Ild3est}
I^{(3)}_{l,d;J}(X)\ll
\frac{X^4\log X}{d}\int\limits_{H}^{\infty}\frac{1}{t}\bigg(\frac{k}{2\pi\delta t}\bigg)^k \,dt
=\frac{X^4\log X}{dk}\bigg(\frac{k}{2\pi\delta H}\bigg)^k
\ll\frac{1}{d}\,.
\end{equation}

\section{Upper bound of $\mathbf{\Gamma_3(X)}$}\label{SectionGamma3}
\indent

Consider the sum  $\Gamma_3(X)$.\\
Since
\begin{equation*}
\sum\limits_{d|p_1-1\atop{d\geq X/D}}\chi_4(d)=\sum\limits_{m|p_1-1\atop{m\leq (p_1-1)D/X}}
\chi_4\bigg(\frac{p_1-1}{m}\bigg)
=\sum\limits_{j=\pm1}\chi_4(j)\sum\limits_{m|p_1-1\atop{m\leq (p_1-1)D/X
\atop{\frac{p_1-1}{m}\equiv j\,(4)}}}1
\end{equation*}
then from \eqref{Gamma3} and \eqref{Ild} we obtain
\begin{equation*}
\Gamma_3(X)=\sum\limits_{m<D\atop{2|m}}\sum\limits_{j=\pm1}\chi_4(j)I_{1+jm,4m;J_m}(X)\,,
\end{equation*}
where $J_m=\big(\max\{1+mX/D,X/2\},X\big]$.
The last formula and \eqref{Ilddecomp} give us
\begin{equation}\label{Gamma3decomp}
\Gamma_3(X)=\Gamma_3^{(1)}(X)+\Gamma_3^{(2)}(X)+\Gamma_3^{(3)}(X)\,,
\end{equation}
where
\begin{equation}\label{Gamma3i}
\Gamma_3^{(i)}(X)=\sum\limits_{m<D\atop{2|m}}\sum\limits_{j=\pm1}\chi_4(j)
I_{1+jm,4m;J_m}^{(i)}(X)\,,\;\; i=1,\,2,\,3.
\end{equation}

\subsection{Estimation of $\mathbf{\Gamma_3^{(1)}(X)}$}
\indent

From \eqref{Ild1est} and \eqref{Gamma3i} we get
\begin{align}\label{Gamma31}
\Gamma_3^{(1)}(X)=\Gamma^*&
+\mathcal{O}\Big(\varepsilon X^{3-c}(\log X)\Sigma_1\Big)
+\mathcal{O}\bigg(\frac{\varepsilon X^{4-c}}{e^{(\log X)^{1/6}}}\Sigma_2\bigg)\,,
\end{align}
where
\begin{align}
\label{Gamma*}
&\Gamma^*=\sum\limits_{m<D\atop{2|m}}
\Phi_J(X, 4m)\sum\limits_{j=\pm1}\chi_4(j)\,,\\
\label{Sigma1}
&\Sigma_1=\sum\limits_{m<D\atop{2|m}}
\max\limits_{|t|\leq\Delta}\bigg|S_{1+jm,4m;J}(t)-\frac{I_J(t)}{\varphi(4m)}\bigg|\,,\\
\label{Sigma2}
&\Sigma_2=\sum\limits_{m<D}\frac{1}{\varphi(4m)}\,.
\end{align}
From the properties of $\chi(k)$ we have that
\begin{equation}\label{Gamma*est}
\Gamma^*=0\,.
\end{equation}
By \eqref{D}, \eqref{SldalphaX},  \eqref{IJalphaX}, \eqref{Sigma1} and Lemma \ref{Bomb-Vin-Dim} we get
\begin{equation}\label{Sigma1est}
\Sigma_1\ll\frac{X}{\log^AX}\,.
\end{equation}
It is well known that
\begin{equation}\label{Sigma2est}
\Sigma_2\ll \log X\,.
\end{equation}
Bearing in mind \eqref{Gamma31}, \eqref{Gamma*est}, \eqref{Sigma1est} and \eqref{Sigma2est} we obtain
\begin{equation}\label{Gamma31est}
\Gamma_3^{(1)}(X)\ll\frac{\varepsilon X^{4-c}}{\log X}\,.
\end{equation}

\subsection{Estimation of $\mathbf{\Gamma_3^{(2)}(X)}$}
\indent

Now we consider $\Gamma_3^{(2)}(X)$. From \eqref{Ild2} and \eqref{Gamma3i} we have
\begin{equation}\label{Gamma32}
\Gamma_3^{(2)}(X)=\int\limits_{\Delta\leq|t|\leq H}\Theta(t)S^3(t)K(t)e(-Nt)\,dt\,,
\end{equation}
where
\begin{equation}\label{Kt}
K(t)=\sum\limits_{m<D\atop{2|m}}\sum\limits_{j=\pm1}\chi_4(j)S_{1+jm,4m;J_m}(t)\,.
\end{equation}

\begin{lemma}\label{IntK2}
For the sum denoted by \eqref{Kt} we have
\begin{equation*}
\int\limits_{\Delta}^{H}|K(t)|^2|\Theta(t)|\,dt\ll X\log^7 X\,.
\end{equation*}
\end{lemma}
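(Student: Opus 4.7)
My plan is to view $K(t)$ as a single exponential sum over primes weighted by a divisor-character sum, then bound its weighted mean square via a direct expansion of the square. Using the bijection $d\leftrightarrow(p-1)/d$ on divisors of $p-1$, together with the observation that $\chi_4((p-1)/m)\neq 0$ forces $(p-1)/m$ to be odd (so $m$ absorbs all $2$'s and $2\,|\,m$ is automatic), one rewrites the definition \eqref{Kt} as
\begin{equation*}
K(t)=\sum_{X/2<p\leq X} c(p)\,e(tp^c)\log p,\qquad c(p)=\sum_{\substack{d\,|\,p-1\\ d\geq X/D}}\chi_4(d),
\end{equation*}
for which the trivial estimate $|c(p)|\leq d(p-1)$ will suffice.

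Next I would split $[\Delta,H]$ into unit subintervals $[n,n+1]$. By Lemma \ref{Fourier} we have $|\Theta(t)|\ll\min(\varepsilon,1/t)$ for $t\geq\Delta$, so
\begin{equation*}
\sum_{\lceil\Delta\rceil\leq n\leq\lfloor H\rfloor}\sup_{t\in[n,n+1]}|\Theta(t)|\ll \sum_{n\leq 1/\varepsilon}\varepsilon+\sum_{1/\varepsilon<n\leq H}\frac{1}{n}\ll \log\log X,
\end{equation*}
and it will suffice to show $\int_n^{n+1}|K(t)|^2\,dt\ll X\log^6 X$ uniformly in $n$.

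For this, expand the square and integrate term-by-term:
\begin{equation*}
\int_n^{n+1}|K(t)|^2\,dt = \sum_{p_1,p_2} c(p_1)\overline{c(p_2)}\log p_1\log p_2\int_n^{n+1} e(t(p_1^c-p_2^c))\,dt,
\end{equation*}
the inner integral being bounded in absolute value by $\min(1,1/(\pi|p_1^c-p_2^c|))$. The diagonal $p_1=p_2$ contributes $\sum_p|c(p)|^2\log^2 p\leq \log^2 X\sum_{n\leq X}d(n)^2\ll X\log^5 X$ by the classical bound $\sum_{n\leq X}d(n)^2\ll X\log^3 X$. For the off-diagonal, the mean value theorem gives $|p_1^c-p_2^c|\asymp X^{c-1}|p_1-p_2|$; combining this with $|c(p_1)c(p_2)|\leq\frac{1}{2}(|c(p_1)|^2+|c(p_2)|^2)$ and $\sum_{h\leq X}1/h\ll\log X$ yields an off-diagonal bound of $O(X^{2-c}\log^6 X)=O(X\log^6 X)$ since $c>1$. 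The two parts combine to the desired mean-value estimate, and hence
\begin{equation*}
\int_\Delta^H|K(t)|^2|\Theta(t)|\,dt\ll\log\log X\cdot X\log^6 X\ll X\log^7 X.
\end{equation*}

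The principal technical obstacle is the reorganization of $K(t)$ into the single prime sum above: one has to track carefully the $2$-adic valuation of $p-1$ and the range constraint $p\in J_m$ arising from the divisor flip. Once this rewriting is in hand, the mean-value step is essentially an adaptation of Lemma \ref{intLintI}(iii) to the coefficient sequence $c(p)$.
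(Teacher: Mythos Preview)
Your argument is correct. The paper itself does not give a proof of this lemma but simply cites \cite{Dimitrov3}, Lemma~22; your proposal supplies a clean self-contained proof. The route you take --- rewriting $K(t)$ as a single weighted prime sum via the divisor flip $d\leftrightarrow (p-1)/d$ (with the observation that $\chi_4(d)\neq 0$ forces $d$ odd, hence $2\mid m$), and then bounding $\int_n^{n+1}|K(t)|^2\,dt$ by direct expansion using $|c(p)|\leq d(p-1)$ and $\sum_{n\le X}d(n)^2\ll X\log^3 X$ --- is exactly the natural adaptation of Lemma~\ref{intLintI}\,(iii) to coefficients bounded by the divisor function, and is almost certainly what the cited reference does as well.

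Two small points worth recording explicitly in a write-up: (i) in the divisor flip, the outer constraint $m<D$ is redundant once $p\in J_m$ (i.e.\ $m\le (p-1)D/X$) is imposed, since $p\le X$; this is why the coefficient collapses cleanly to $c(p)=\sum_{d\mid p-1,\,d\ge X/D}\chi_4(d)$; (ii) in the $\Theta$-sum, the first unit interval $[\Delta,1]$ (with $\Delta<1$) should be handled separately with the bound $|\Theta|\ll\varepsilon$, but this contributes only $O(\varepsilon\cdot X\log^6 X)$ and is absorbed. Neither point affects the validity of your argument.
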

\begin{proof}
See (\cite{Dimitrov3}, Lemma 22).
\end{proof}

\begin{lemma}\label{SIest} Assume that
\begin{equation}\label{Conditions1}
\Delta \leq |t| \leq H\,, \quad |a(m)|\ll m^\eta \,,\quad LM\asymp X\,,\quad L\gg X^{\frac{2}{5}} \,.
\end{equation}
Set
\begin{equation}\label{SI}
S_I=\sum\limits_{m\sim M}a(m)\sum\limits_{l\sim L}e(tm^cl^c)\,.
\end{equation}
Then
\begin{equation*}
S_I\ll X^{\frac{1207}{1288}+\eta}\,.
\end{equation*}
\end{lemma}
\begin{proof}
We first consider the case when
\begin{equation}\label{M411}
M\ll X^{\frac{6177}{12880}}.
\end{equation}
By \eqref{Delta}, \eqref{H}, \eqref{Conditions1}, \eqref{SI}, \eqref{M411} and Lemma \ref{Exponentpairs}
with the exponent pair $\left(\frac{1}{14},\frac{11}{14}\right)$ we obtain
\begin{align}\label{SIest1}
S_I&\ll X^\eta\sum\limits_{m\sim M}\bigg|\sum\limits_{l\sim L}e(tm^cl^c)\bigg|\nonumber\\
&\ll X^\eta\sum\limits_{m\sim M}\Bigg(\big(|t|X^cL^{-1}\big)^{\frac{1}{14}}L^{\frac{11}{14}}+\frac{1}{|t|X^cL^{-1}}\Bigg)\nonumber\\
&\ll X^\eta\Big(H^{\frac{1}{14}}X^{\frac{c}{14}}ML^{\frac{5}{7}}+\Delta^{-1}X^{1-c}\Big)\nonumber\\
&\ll X^\eta\Big(X^{\frac{c+10}{14}}M^{\frac{2}{7}}+X^{\frac{3}{4}}\Big)\nonumber\\
&\ll X^{\frac{1207}{1288}+\eta}\,.
\end{align}
Next we consider the case when
\begin{equation}\label{M41135}
X^{\frac{6177}{12880}}\ll M\ll X^{\frac{3}{5}}\,.
\end{equation}
Using \eqref{SI}, \eqref{M41135} and Lemma \ref{Baker-Weingartnerest} we deduce
\begin{equation}\label{SIest2}
S_I\ll X^{\frac{1207}{1288}+\eta}\,.
\end{equation}
Bearing in mind \eqref{SIest1} and \eqref{SIest2} we establish the statement in the lemma.
\end{proof}

\begin{lemma}\label{SIIest} Assume that
\begin{equation}\label{Conditions2}
\Delta \leq |t| \leq H\,, \quad |a(m)|\ll m^\eta \,, \quad |b(l)|\ll l^\eta\,,
\quad LM\asymp X\,,\quad X^{\frac{1}{5}} \ll L\ll X^{\frac{1}{3}} \,.
\end{equation}
Set
\begin{equation}\label{SII}
S_{II}=\sum\limits_{m\sim M}a(m)\sum\limits_{l\sim L}b(l)e(tm^cl^c)\,.
\end{equation}
Then
\begin{equation*}
S_{II}\ll X^{\frac{1207}{1288}+\eta}\,.
\end{equation*}
\end{lemma}
\begin{proof}
We first consider the case when
\begin{equation}\label{L151960}
X^{\frac{1}{5}} \ll L\ll X^{\frac{261}{805}}\,.
\end{equation}
From \eqref{Conditions2}, \eqref{SII}, Cauchy's inequality and Lemma \ref{Squareout} with $Q=X^{\frac{1}{5}}$ it follows
\begin{equation}\label{SIIest1}
|S_{II}|^2\ll X^\eta\Bigg(\frac{X^2}{Q}+\frac{X}{Q}\sum\limits_{1\leq q\leq Q}
\sum\limits_{l\sim L}\bigg|\sum\limits_{m\sim M}e\big(f(l, m, q)\big)\bigg|\Bigg)
\end{equation}
where $f(l, m, q)=tm^c\big((l+q)^c-l^c\big)$.
Now \eqref{Delta}, \eqref{H}, \eqref{Conditions2}, \eqref{L151960}, \eqref{SIIest1} and Lemma \ref{Exponentpairs} with the exponent pair
$\left(\frac{1}{6},\frac{2}{3}\right)$ give us
\begin{align}\label{SIIest2}
S_{II}&\ll X^\eta\Bigg(\frac{X^2}{Q}+\frac{X}{Q}\sum\limits_{1\leq q\leq Q}
\sum\limits_{l\sim L}\bigg(\big(|t|qX^{c-1}\big)^{\frac{1}{6}}M^{\frac{2}{3}}+\frac{1}{|t|qX^{c-1}}\bigg)\Bigg)^{\frac{1}{2}}\nonumber\\
&\ll X^\eta\Bigg(\frac{X^2}{Q}+\frac{X}{Q}
\bigg(H^{\frac{1}{6}}X^{\frac{c-1}{6}}M^{\frac{2}{3}}Q^{\frac{7}{6}}L+\Delta^{-1}X^{1-c}L\log Q\bigg)\Bigg)^{\frac{1}{2}}\nonumber\\
&\ll X^{\frac{1207}{1288}+\eta}\,.
\end{align}
Next we consider the case when
\begin{equation}\label{L19613}
X^{\frac{261}{805}}\ll L\ll X^{\frac{1}{3}}\,.
\end{equation}
Using \eqref{SII}, \eqref{L19613} and Lemma \ref{Sargos-Wuest} we find
\begin{equation}\label{SIIest3}
S_{II}\ll X^{\frac{1207}{1288}+\eta}\,.
\end{equation}
Taking into account \eqref{SIIest2} and \eqref{SIIest3} we establish the statement in the lemma.
\end{proof}

\begin{lemma}\label{SalphaXest} Let $\Delta \leq |t| \leq H$.
Then for the sum denoted by \eqref{SalphaX} we have
\begin{equation*}
S(t)\ll X^{\frac{1207}{1288}+\eta}\,.
\end{equation*}
\end{lemma}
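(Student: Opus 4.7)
The plan is to invoke Heath-Brown's identity (Lemma \ref{Heath-Brown}) to decompose $S(t)$ into bilinear sums of Type I and Type II shape, and then estimate each piece using Lemmas \ref{SIest} and \ref{SIIest}. The key is to choose the parameters $U,V,Z$ so that the ranges dictated by Heath-Brown match exactly the admissible ranges in Lemmas \ref{SIest} and \ref{SIIest}.

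First, I would pass from the prime sum $S(t)$ to the von Mangoldt sum $\sum_{n\sim X}\Lambda(n)e(tn^c)$. The difference consists of contributions from prime powers $p^k$ with $k\ge 2$ and $p^k\sim X$, which is trivially $O(X^{1/2}\log X)$; this is negligible compared with the target $X^{1207/1288+\eta}$. (The truncation $p\in(\mu X,X]$ as opposed to $p\in (X/2,X]$ is immaterial, since $\mu=1/2$ in the relevant application.)

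Next, I would apply Lemma \ref{Heath-Brown} with the choice
\begin{equation*}
U=X^{1/5},\qquad V=X^{1/3},\qquad Z\asymp X^{2/5},
\end{equation*}
where $Z$ is adjusted by $O(1)$ to be a half-integer. The conditions $X\gg Z^2U$, $Z\gg U^2$, $V^3\gg X$ all hold with equality up to absolute constants, so the hypotheses of Lemma \ref{Heath-Brown} are satisfied. This decomposes $\sum_{n\sim X}\Lambda(n)e(tn^c)$ into $O(\log^{10}X)$ sums, each of which is either a Type I sum with $L\gg Z=X^{2/5}$ and $LM\asymp X$, or a Type II sum with $X^{1/5}\ll L\ll X^{1/3}$ and $LM\asymp X$, and with coefficients of size $m^\eta$ (and $l^\eta$ in the Type II case).

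The Type I sums then fall precisely under the hypotheses of Lemma \ref{SIest}, and the Type II sums under those of Lemma \ref{SIIest}; each individual sum is therefore bounded by $X^{1207/1288+\eta}$. Summing $O(\log^{10}X)$ such contributions and absorbing the log factor into $X^\eta$ yields the claim. The main point requiring care, rather than any real obstacle, is the fact that all three of Heath-Brown's admissibility inequalities $X\gg Z^2U$, $Z\gg U^2$, $V^3\gg X$ become tight simultaneously at our choice of parameters, which is exactly what forces the final exponent $1207/1288$; had any of them failed, one would have to shrink the range of one bilinear estimate and reinflate the other, giving a worse bound.
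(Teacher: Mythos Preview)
Your proposal is correct and follows essentially the same route as the paper: reduce to the von Mangoldt sum at cost $O(X^{1/2+\eta})$, apply Heath-Brown's decomposition with $U=X^{1/5}$, $V=X^{1/3}$, $Z=[X^{2/5}]+\tfrac12$, and invoke Lemmas \ref{SIest} and \ref{SIIest} for the Type I and Type II pieces respectively. The choices of parameters and the bookkeeping match the paper's argument exactly.
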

\begin{proof}
In order to prove the lemma  we will use the formula
\begin{equation}\label{Lambdalog}
S(t)=S^\ast(t)+\mathcal{O}\Big(X^{\frac{1}{2}+\varepsilon}\Big)\,,
\end{equation}
where
\begin{equation*}
S^\ast(t)=\sum\limits_{X/2<n\leq X}\Lambda(n)e(t n^c)\,.
\end{equation*}
Let
\begin{equation*}
U=X^{\frac{1}{5}}\,,\quad V=X^{\frac{1}{3}}\,,\quad Z=\big[X^{\frac{2}{5}}\big]+\frac{1}{2}\,.
\end{equation*}
According to Lemma \ref{Heath-Brown}, the sum $S^\ast(t)$
can be decomposed into $O\Big(\log^{10}X\Big)$ sums, each of which is either of Type I
\begin{equation*}
\sum\limits_{m\sim M}a(m)\sum\limits_{l\sim L}e(tm^cl^c)\,,
\end{equation*}
where
\begin{equation*}
L \gg Z\,, \quad  LM\asymp X\,, \quad |a(m)|\ll m^\eta\,,
\end{equation*}
or of Type II
\begin{equation*}
\sum\limits_{m\sim M}a(m)\sum\limits_{l\sim L}b(l)e(tm^cl^c)\,,
\end{equation*}
where
\begin{equation*}
U \ll L \ll V\,, \quad  LM\asymp X\,, \quad |a(m)|\ll m^\eta\,,\quad |b(l)|\ll l^\eta\,.
\end{equation*}
Using Lemma \ref{SIest} and  Lemma \ref{SIIest} we deduce
\begin{equation}\label{Sast}
S^\ast(t)\ll X^{\frac{1207}{1288}+\eta}\,.
\end{equation}
Bearing in mind \eqref{Lambdalog} and  \eqref{Sast} we establish the statement in the lemma.
\end{proof}

\begin{lemma}\label{IntS2}
For the sum denoted by \eqref{SalphaX} we have
\begin{equation*}
\int\limits_{\Delta}^{H}|S(t)|^2|\Theta(t)|\,dt\ll X\log^4X\,.
\end{equation*}
\end{lemma}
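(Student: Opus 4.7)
The plan is to exploit the decay of $\Theta(t)$ given by Lemma \ref{Fourier} together with the short-interval second moment bound from Lemma \ref{intLintI}(iii), treating the ranges $\Delta\le t\le 1$ and $1\le t\le H$ separately (note that in our regime $\Delta=X^{1/4-c}\ll 1$ since $c>1$).

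On the range $[1,H]$ I would use the middle term of the minimum in Lemma \ref{Fourier}, namely $|\Theta(t)|\le 1/(\pi|t|)$, chop into unit intervals, and apply Lemma \ref{intLintI}(iii) on each piece:
\begin{equation*}
\int_{1}^{H}|S(t)|^2|\Theta(t)|\,dt
\ll \sum_{n=1}^{\lfloor H\rfloor}\frac{1}{n}\int_{n}^{n+1}|S(t)|^2\,dt
\ll X\log^3 X\sum_{n=1}^{\lfloor H\rfloor}\frac{1}{n}
\ll X\log^4 X,
\end{equation*}
since $H=\log^2 X/\varepsilon$ is of polylogarithmic size in $X$, so $\log H\ll \log X$.

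On the range $[\Delta,1]$ I would use the first term in the Fourier bound, $|\Theta(t)|\le 2a=9\varepsilon/5\ll 1$, and dominate by a single application of Lemma \ref{intLintI}(iii) on the unit interval $[0,1]$:
\begin{equation*}
\int_{\Delta}^{1}|S(t)|^2|\Theta(t)|\,dt
\ll \int_{0}^{1}|S(t)|^2\,dt
\ll X\log^3 X.
\end{equation*}
Adding the two contributions yields the claimed $X\log^4 X$ bound.

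There is no real obstacle here; the statement is essentially a direct consequence of item (iii) of Lemma \ref{intLintI} combined with the $1/|t|$ decay of $\Theta$. The only point that requires a little care is the bookkeeping at the transition between the ranges (for which either bound works, since $1\le t\le H$ is the only place where the harmonic sum $\sum 1/n$ produces a factor of $\log X$) and the verification that $\log H\ll \log X$, which holds because $\varepsilon^{-1}=(\log X)^{\theta_0}(\log\log X)^{-6}$.
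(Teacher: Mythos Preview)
Your proof is correct and follows essentially the same approach as the paper: bound $|\Theta(t)|$ via Lemma~\ref{Fourier}, split the range of integration, and apply Lemma~\ref{intLintI}(iii) on unit intervals. The only cosmetic difference is the split point---the paper cuts at $t=1/\varepsilon$ (the natural crossover for $\min(2a,1/(\pi|t|))$), using $|\Theta(t)|\ll\varepsilon$ on $[\Delta,1/\varepsilon]$ and $|\Theta(t)|\ll 1/t$ on $[1/\varepsilon,H]$, whereas you cut at $t=1$; both lead to the same bound, and in fact $\log H\asymp\log\log X$, so either split already yields $X(\log X)^3\log\log X\ll X\log^4 X$.
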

\begin{proof}
According to Lemma \ref{Fourier} and Lemma \ref{intLintI} (iii)  we find
\begin{align*}\label{IntS2Thetaest}
\int\limits_{\Delta}^{H}|S(t)|^2|\Theta(t)|\,dt
&\ll\varepsilon\int\limits_{\Delta}^{1/\varepsilon}|S(t)|^2dt
+\int\limits_{1/\varepsilon}^{H}\frac{|S(t)|^2}{t}\,dt\\
&\ll\varepsilon\sum\limits_{0\leq n\leq1/\varepsilon}\int\limits_{n}^{n+1}|S(t)|^2\,dt
+\sum\limits_{1/\varepsilon-1\leq n\leq H}\frac{1}{n}\int\limits_{n}^{n+1}|S(t)|^2\,dt\\
&\ll X\log^4X\,.
\end{align*}
\end{proof}

\begin{lemma}\label{IntS4}
For the sum denoted by \eqref{SalphaX} we have
\begin{equation*}
\int\limits_{\Delta}^{H}|S(t)|^4|\Theta(t)|\,dt\ll X^{\frac{1167}{322}-\frac{3c}{4}+\eta}\,.
\end{equation*}
\end{lemma}
\begin{proof}
Our argument is modification of Li's and Cai's \cite{Cai} argument.
Denote
\begin{equation}\label{At}
A(t)=\sum\limits_{n\sim X} e(t n^c)
\end{equation}
For any continuous function $\Psi(x)$ defined in the interval $[-H, H]$  we have
\begin{align}\label{IntSPsi}
\left|\int\limits_{\Delta\leq|t|\leq H}S(t)\Psi(t)\,dt\right|
&=\left|\sum\limits_{p\sim X}(\log p)\int\limits_{\Delta\leq|t|\leq H}e(t p^c)\Psi(t)\,dt\right|\nonumber\\
&\leq\sum\limits_{p\sim X}(\log p)\left|\int\limits_{\Delta\leq|t|\leq H}e(t p^c)\Psi(t)\,dt\right|\nonumber\\
&\leq(\log X)\sum\limits_{n\sim X}\left|\int\limits_{\Delta\leq|t|\leq H}e(t n^c)\Psi(t)\,dt\right|\,.
\end{align}
By \eqref{IntSPsi} and Cauchy's inequality we obtain
\begin{align}\label{IntSPsi2}
\left|\int\limits_{\Delta\leq|t|\leq H}S(t)\Psi(t)\,dt\right|^2
&\leq X(\log X)^2\sum\limits_{n\sim X}\left|\int\limits_{\Delta\leq|t|\leq H}e(t n^c)\Psi(t)\,dt\right|^2\nonumber\\
&= X(\log X)^2\int\limits_{\Delta\leq|y|\leq H}\overline{\Psi(y)}\,dy
\int\limits_{\Delta\leq|t|\leq H}\Psi(t)A(t-y)\,dt\nonumber\\
&\leq X(\log X)^2\int\limits_{\Delta\leq|y|\leq H}|\Psi(y)|\,dy
\int\limits_{\Delta\leq|t|\leq H}|\Psi(t)||A(t-y)|\,dt\,.
\end{align}
From \eqref{At} and Lemma \ref{Exponentpairs} with the exponent pair
$\left(\frac{1}{2},\frac{1}{2}\right)$ it follows
\begin{equation}\label{Atest}
A(t)\ll\min\bigg(\big(|t|X^{c-1}\big)^{\frac{1}{2}}X^{\frac{1}{2}}+\frac{1}{|t|X^{c-1}}, X\bigg)\,.
\end{equation}
Using  \eqref{H} and \eqref{Atest} we write
\begin{align*}
&\int\limits_{\Delta\leq|t|\leq H}|\Psi(t)||A(t-y)|\,dt\nonumber\\
&\ll\int\limits_{\Delta\leq|t|\leq H\atop{|t-y|\leq X^{-c}}}|\Psi(t)||A(t-y)|\,dt
+\int\limits_{\Delta\leq|t|\leq H\atop{X^{-c}<|t-y|\leq2H}}|\Psi(t)||A(t-y)|\,dt\nonumber\\
&\ll X\int\limits_{\Delta\leq|t|\leq H\atop{|t-y|\leq X^{-c}}}|\Psi(t)|\,dt
+\int\limits_{\Delta\leq|t|\leq H\atop{X^{-c}<|t-y|\leq2H}}|\Psi(t)|
\bigg(\big(|t-y|X^{c-1}\big)^{\frac{1}{2}}X^{\frac{1}{2}}+\frac{1}{|t-y|X^{c-1}}\bigg)\,dt\nonumber\\
\end{align*}

\begin{align}\label{IntPsiAt}
&\ll X\max\limits_{\Delta\leq|t|\leq H}|\Psi(t)|\int\limits_{|t-y|\leq X^{-c}}\,dt
+X^{\frac{c}{2}+\eta}\int\limits_{\Delta\leq|t|\leq H}|\Psi(t)|\,dt\nonumber\\
&+X^{1-c}\max\limits_{\Delta\leq|t|\leq H}|\Psi(t)|\int\limits_{X^{-c}<|t-y|\leq2H}\frac{1}{|t-y|}\,dt\nonumber\\
&\ll X^{1-c}(\log X)\max\limits_{\Delta\leq|t|\leq H}|\Psi(t)|
+X^{\frac{c}{2}+\eta}\int\limits_{\Delta\leq|t|\leq H}|\Psi(t)|\,dt\,.
\end{align}
Now  \eqref{IntSPsi2} and \eqref{IntPsiAt}  imply
\begin{align}\label{IntSPsi2est}
\left|\int\limits_{\Delta\leq|t|\leq H}S(t)\Psi(t)\,dt\right|^2
&\leq X^{2-c+\eta}\max\limits_{\Delta\leq|t|\leq H}|\Psi(t)|
\int\limits_{\Delta\leq|t|\leq H}|\Psi(t)|\,dt\nonumber\\
&+X^{\frac{c+2}{2}+\eta}\left(\int\limits_{\Delta\leq|t|\leq H}|\Psi(t)|\,dt\right)^2\,.
\end{align}
Let's put first
\begin{equation}\label{Psit1}
\Psi_1(t)=\overline{S(t)}|S(t)||\Theta(t)|\,.
\end{equation}
Bearing in mind \eqref{varepsilon}, \eqref{IntSPsi2est}, \eqref{Psit1},
Lemma \ref{Fourier}, Lemma \ref{SalphaXest} and Lemma \ref{IntS2} we get
\begin{align}\label{IntS3Psi2est}
\int\limits_{\Delta\leq|t|\leq H}|S(t)|^3|\Theta(t)|\,dt
&=\int\limits_{\Delta\leq|t|\leq H}S(t)\Psi_1(t)\,dt\nonumber\\
&\ll \varepsilon^{\frac{1}{2}}X^{\frac{2495}{1288}-\frac{c}{2}+\eta}
\left(\int\limits_{\Delta\leq|t|\leq H}|S(t)|^2|\Theta(t)|\,dt\right)^{1/2}\nonumber\\
&+X^{\frac{c+2}{4}+\eta}\int\limits_{\Delta\leq|t|\leq H}|S(t)|^2|\Theta(t)|\,dt\nonumber\\
&\ll X^{\frac{3139}{1288}-\frac{c}{2}+\eta}+X^{\frac{c+6}{4}+\eta}\nonumber\\
&\ll X^{\frac{3139}{1288}-\frac{c}{2}+\eta}\,.
\end{align}
Next we put
\begin{equation}\label{Psit2}
\Psi_2(t)=\overline{S(t)}|S(t)|^2|\Theta(t)|\,.
\end{equation}
Taking into account \eqref{varepsilon}, \eqref{IntSPsi2est}, \eqref{IntS3Psi2est}, \eqref{Psit2}
and Lemma \ref{SalphaXest}  we find
\begin{align*}
\int\limits_{\Delta\leq|t|\leq H}|S(t)|^4|\Theta(t)|\,dt
&=\int\limits_{\Delta\leq|t|\leq H}S(t)\Psi_2(t)\,dt\\
&\ll \varepsilon^{\frac{1}{2}}X^{\frac{6197}{2576}-\frac{c}{2}+\eta}
\left(\int\limits_{\Delta\leq|t|\leq H}|S(t)|^3|\Theta(t)|\,dt\right)^{\frac{1}{2}}\\
&+X^{\frac{c+2}{4}+\eta}\int\limits_{\Delta\leq|t|\leq H}|S(t)|^3|\Theta(t)|\,dt\\
&\ll X^{\frac{1167}{322}-\frac{3c}{4}+\eta}+X^{\frac{3139}{1288}+\frac{2-c}{4}+\eta}\\
&\ll X^{\frac{1167}{322}-\frac{3c}{4}+\eta}\,.
\end{align*}
The lemma is proved.
\end{proof}
Finally \eqref{varepsilon}, \eqref{Gamma32}, Cauchy's inequality,
Lemma \ref{IntK2}, Lemma \ref{SalphaXest} and  Lemma \ref{IntS4} yield
\begin{align}\label{Gamma32est}
\Gamma_3^{(2)}(X)&\ll\max\limits_{\Delta\leq t\leq H}|S(t)|
\left(\int\limits_{\Delta}^{H}|S(t)|^4|\Theta(t)|\,dt\right)^{1/2}
\left(\int\limits_{\Delta}^{H}|K(t)|^2|\Theta(t)|\,dt\right)^{1/2}\nonumber\\
&\ll\frac{\varepsilon X^{4-c}}{\log X}\,.
\end{align}

\subsection{Estimation of $\mathbf{\Gamma_3^{(3)}(X)}$}
\indent

From \eqref{Ild3est} and \eqref{Gamma3i} we have
\begin{equation}\label{Gamma33est}
\Gamma_3^{(3)}(X)\ll\sum\limits_{m<D}\frac{1}{d}\ll \log X\,.
\end{equation}

\subsection{Estimation of $\mathbf{\Gamma_3(X)}$}
\indent

Summarizing \eqref{Gamma3decomp}, \eqref{Gamma31est}, \eqref{Gamma32est} and \eqref{Gamma33est} we get
\begin{equation}\label{Gamm3est}
\Gamma_3(X)\ll\frac{\varepsilon X^{4-c}}{\log X}\,.
\end{equation}

\newpage

\section{Upper bound of $\mathbf{\Gamma_2(X)}$}\label{SectionGamma2}
\indent

In this section we need the following lemma.
\begin{lemma}\label{Thenumberofsolutions}
Let $1<c<3$, $c\neq2$ and $N_0$ is a sufficiently large positive number.
Then for the number of solutions $B_0(N_0)$ of the  diophantine inequality
\begin{equation}\label{Ternary}
|p_1^c+p_2^c+p_3^c-N_0|<\varepsilon
\end{equation}
in prime numbers $p_1,\,p_2,\,p_3 \in \left((N_0/2)^{\frac{1}{c}}/2\,,\, (N_0/2)^{\frac{1}{c}}\right]$ we have that
\begin{equation*}
B_0(N_0)\ll \frac{\varepsilon N_0^{\frac{3}{c}-1}}{\log^3N_0}\,.
\end{equation*}
\end{lemma}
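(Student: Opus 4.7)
The plan is to prove Lemma \ref{Thenumberofsolutions} via the same Fourier-analytic circle-method machinery used for the main theorem, adapted to the ternary setting. Set $X_0 = (N_0/3)^{1/c}$, so that any solution $(p_1, p_2, p_3)$ has $\max_i p_i \asymp X_0$. First I would construct a non-negative smooth majorant $\tilde\theta \geq \mathbf{1}_{(-\varepsilon, \varepsilon)}$ supported in $[-2\varepsilon, 2\varepsilon]$, using Lemma \ref{Fourier} with suitably enlarged parameters; its Fourier transform $\tilde\Theta$ inherits the same decay estimates. A dyadic decomposition of the ranges of the primes reduces the problem essentially to the main case $p_1, p_2, p_3 \in (X_0/2, X_0]$; dyadic ranges in which some $p_i$ is much smaller than $X_0$ contribute only $O(\varepsilon X_0^{3-c}/\log X_0)$ to the weighted sum, as one sees by absorbing the small $\log p$-factor into a binary Piatetski-Shapiro-type estimate. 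After this reduction the task amounts to bounding
\[
\int_{-\infty}^{\infty} \tilde\Theta(t)\, S(t)^3\, e(-N_0 t)\, dt
\]
by $\varepsilon X_0^{3-c}$ and then dividing by $(\log X_0)^3$.

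I would then split this integral at $|t|=\Delta$ and $|t|=H$. On $|t|\leq\Delta$, Lemma \ref{SIasympt} replaces $S(t)$ by $I(t)$, the cross-terms being controlled by Lemma \ref{intLintI} exactly as in the derivation of \eqref{Ild1-PhiDeltaJX}; the decay $|I(t)|\ll\min(X_0,\, X_0^{1-c}/|t|)$ from Lemma \ref{IestTitchmarsh} then yields a main-term contribution of order $\varepsilon X_0^{3-c}$ after a short $t$-range calculation. On $|t|>H$, the rapid decay of $\tilde\Theta$ supplied by Lemma \ref{Fourier} with $k=[\log X_0]$ produces a contribution of $O(1)$, exactly as in \eqref{Ild3est}.

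The heart of the argument, and the main obstacle, is the mid-range $\Delta\leq|t|\leq H$, where one needs
\[
\int_{\Delta}^{H} |\tilde\Theta(t)|\, |S(t)|^3\, dt \ll \varepsilon X_0^{3-c}.
\]
My plan is to apply Cauchy--Schwarz in the form $\int |\tilde\Theta||S|^3\,dt \leq \bigl(\int|\tilde\Theta||S|^2\,dt\bigr)^{1/2}\bigl(\int|\tilde\Theta||S|^4\,dt\bigr)^{1/2}$ and invoke Lemmas \ref{IntS2} and \ref{IntS4}; an alternative, closer to the treatment of $\Gamma_3^{(2)}(X)$ in Section \ref{SectionGamma3}, is to apply the key inequality \eqref{IntSPsi2est} with $\Psi(t)=\overline{S(t)}|S(t)|\tilde\Theta(t)$ and iterate. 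The exponent bookkeeping is tight: Lemmas \ref{SIest}, \ref{SIIest} and \ref{SalphaXest} are calibrated precisely so that the restriction $c<967/805$ delivers the power saving needed to dominate this mid-range estimate by the main term. Combining the three range contributions and dividing by $(\log X_0)^3$ gives $B_0(N_0)\ll \varepsilon N_0^{3/c-1}/(\log N_0)^3$ as claimed.
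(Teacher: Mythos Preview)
Your plan for the near-zero range $|t|\le\Delta$ and the tail $|t|>H$ matches the paper, but the mid-range treatment has a genuine gap. The lemma is stated for the full range $1<c<3$, $c\neq2$, yet the tools you invoke for $\Delta\le|t|\le H$ (Lemmas \ref{SalphaXest}, \ref{IntS2}, \ref{IntS4}, and inequality \eqref{IntSPsi2est}) are all tied to the narrow window $c<967/805$. Worse, even in that window they do not give what you need: the bound \eqref{IntS3Psi2est} yields $\int_\Delta^H|S|^3|\Theta|\,dt\ll X^{3139/1288-c/2+\eta}$, and this beats $\varepsilon X^{3-c}$ only when $c<725/644\approx1.126$; the Cauchy--Schwarz combination of Lemmas \ref{IntS2} and \ref{IntS4} gives an even more restrictive constraint. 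So your mid-range argument cannot close, either for the stated range or for the range of the main theorem.

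The paper avoids exponential-sum input on the minor arcs entirely. It chooses a much smaller threshold $\Delta_0\asymp X_0^{-c}(\log X_0)^{A_0}$ (not $\Delta=X^{1/4-c}$), and for the mid-range integral $B_2(X_0)=\int_{\Delta_0\le|t|\le H}\Theta_0(t)S_0^3(t)e(-N_0t)\,dt$ it simply integrates by parts against the factor $e(-N_0t)$. This produces an extra $1/N_0\asymp X_0^{-c}$, after which the trivial bounds $|\Theta_0|\ll\varepsilon$ and $|S_0|\ll X_0$ already give $B_2(X_0)\ll\varepsilon X_0^{3-c}$, valid uniformly for $1<c<3$. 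That integration-by-parts step is the missing idea in your proposal.
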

\begin{proof}
Define
\begin{equation}\label{BX0}
B(X_0)=\sum\limits_{X_0/2<p_1,p_2,p_3\leq X_0\atop{|p_1^c+p_2^c+p_3^c-N_0|<\varepsilon}}
\log p_1\log p_2\log p_3\,,
\end{equation}
where
\begin{equation}\label{X0}
X_0=\left(\frac{N_0}{2}\right)^{\frac{1}{c}}\,.
\end{equation}
We take  the parameters $\displaystyle a_0 = \frac{5\varepsilon}{4}$,
$\displaystyle\delta_0=\frac{\varepsilon}{4}$ and $k_0=[\log X_0]$.
According to Lemma \ref{Fourier} there exists a function $\theta_0(y)$
which is $k_0$ times continuously differentiable and such that
\begin{align*}
&\theta_0(y)=1\quad\quad\quad\mbox{for }\quad\;\; |y|\leq \varepsilon\,;\\
&0<\theta_0(y)<1\quad\,\mbox{for}\quad\quad  \varepsilon <|y|< \frac{3\varepsilon}{2}\,;\\
&\theta_0(y)=0\quad\quad\quad\mbox{for}\quad\quad|y|\geq \frac{3\varepsilon}{2}\,.
\end{align*}
and its Fourier transform
\begin{equation*}
\Theta_0(x)=\int\limits_{-\infty}^{\infty}\theta_0(y)e(-xy)dy
\end{equation*}
satisfies the inequality
\begin{equation}\label{Theta0est}
|\Theta_0(x)|\leq\min\Bigg(\frac{5\varepsilon}{2},\frac{1}{\pi|x|},\frac{1}{\pi |x|}
\bigg(\frac{2k_0}{\pi |x|\varepsilon}\bigg)^{k_0}\Bigg)\,.
\end{equation}
Using  \eqref{BX0}, the definition of $\theta_0(y)$ and  the inverse Fourier
transformation formula we get
\begin{align}\label{BX0est1}
B(X_0)&\leq\sum\limits_{X_0/2<p_1,p_2,p_3\leq X_0}
\theta_0\big(p_1^c+p_2^c+p_3^c-N_0\big)\log p_1\log p_2\log p_3\nonumber\\
&=\int\limits_{-\infty}^{\infty}\Theta_0(t)S_0^3(t)e(-N_0t)\,dt
=B_1(X_0)+B_2(X_0)+B_3(X_0)\,,
\end{align}
where
\begin{align}
\label{S0t}
&S_0(t)=\sum\limits_{X_0/2<p\leq X_0} e(t p^c)\log p\,,\\
\label{Delta0}
&\Delta_0=\frac{(\log X_0)^{A_0}}{X_0^c}\,,\quad A_0>10\,,\\
\label{B1}
&B_1(X_0)=\int\limits_{-\Delta_0}^{\Delta_0}\Theta_0(t)S_0^3(t)e(-N_0t)\,dt\,,\\
\label{B2}
&B_2(X_0)=\int\limits_{\Delta_0\leq|t|\leq H}\Theta_0(t)S_0^3(t)e(-N_0t)\,dt\,,\\
\label{B3}
&B_3(X_0)=\int\limits_{|t|>H}\Theta_0(t)S_0^3(t)e(-N_0 t)\,dt\,.
\end{align}
We begin with the estimation of $B_1(X_0)$. Set
\begin{align}
\label{I0t}
&I_0(t)=\int\limits_{X_0/2}^{X_0}e(t y^c)\,dy\,,\\
\label{PsiDeltaX}
&\Psi_{\Delta_0}(X_0)=\int\limits_{-\Delta_0}^{\Delta_0}\Theta_0(t)I_0^3(t)e(-N_0t)\,dt\,,\\
\label{PsiX}
&\Psi(X_0)=\int\limits_{-\infty}^{\infty}\Theta_0(t)I_0^3(t)e(-N_0t)\,dt\,.
\end{align}
From \eqref{Theta0est}, \eqref{I0t}, \eqref{PsiX} and Lemma \ref{IestTitchmarsh} we obtain
\begin{align}\label{Psiest}
\Psi(X_0)&=\int\limits_{-X_0^{-c}}^{X_0^{-c}}\Theta_0(t)I_0^3(t)e(-N_0t)\,dt
+\int\limits_{|t|>X_0^{-c}}\Theta_0(t)I_0^3(t)e(-N_0t)\,dt\,,\nonumber\\
&\ll\int\limits_{-X_0^{-c}}^{X_0^{-c}}\varepsilon X^3_0\,dt
+\int\limits_{X_0^{-c}}^{\infty}\varepsilon \left(\frac{X_0^{1-c}}{t}\right)^3\,dt\,,\nonumber\\
&\ll\varepsilon X_0^{3-c}\,.
\end{align}
By  \eqref{Theta0est}, \eqref{Delta0}, \eqref{B1}, \eqref{PsiDeltaX},
Lemma \ref{SIasympt} and the trivial estimations
\begin{equation}\label{S0I0trivest}
S_0(t)\ll X_0 \,, \quad I_0(t)\ll X_0
\end{equation}
it follows
\begin{align}\label{B1PsiDelta}
B_1(X_0)-\Psi_{\Delta_0}(X_0)
&\ll\int\limits_{-\Delta_0}^{\Delta_0}|S_0^3(t)-I_0^3(t)||\Theta_0(t)|\,dt\nonumber\\
&\ll\varepsilon\int\limits_{-\Delta_0}^{\Delta_0}
\big|S_0(t)-I_0(t)\big|\Big(|S_0(t)|^2+|I_0(t)|^2\Big)\,dt\nonumber\\
&\ll\varepsilon \frac{X_0}{e^{(\log X_0)^{1/5}}}
\left(\int\limits_{-\Delta_0}^{\Delta_0}|S_0(t)|^2\,dt
+\int\limits_{-\Delta_0}^{\Delta_0}|I_0(t)|^2\,dt\right)\nonumber\\
&\ll\frac{\varepsilon X_0^{3-c}}{e^{(\log X_0)^{1/6}}}\,.
\end{align}
On the other hand \eqref{Theta0est}, \eqref{Delta0}, \eqref{PsiDeltaX},
\eqref{PsiX} and Lemma \ref{IestTitchmarsh} give us
\begin{align}\label{PsiPsiDelta}
|\Psi(X_0)-\Psi_{\Delta_0}(X_0)|&\ll\int\limits_{\Delta_0}^{\infty}|I_0(t)|^3|\Theta_0(t)|\,dt
\ll\frac{\varepsilon}{X_0^{3(c-1)}}\int\limits_{\Delta_0}^{\infty}\frac{dt}{t^3}\nonumber\\
&\ll \frac{\varepsilon}{X_0^{3(c-1)}\Delta^2_0}\ll\frac{\varepsilon X_0^{3-c}}{\log X_0}\,.
\end{align}
From \eqref{Psiest}, \eqref{B1PsiDelta} and \eqref{PsiPsiDelta} and the identity
\begin{equation*}
B_1(X_0)=B_1(X_0)-\Psi_{\Delta_0}(X_0)+\Psi_{\Delta_0}(X_0)-\Psi(X_0)+\Psi(X_0)
\end{equation*}
we deduce
\begin{equation}\label{B1X0est}
B_1(X_0)\ll \varepsilon X_0^{3-c}\,.
\end{equation}
Further we estimate $B_2(X_0)$. Consider the integral
\begin{equation}\label{B2'}
B^\ast_2(X_0)=\int\limits_{\Delta_0}^{H}\Theta_0(t)S_0^3(t)e(-N_0t)\,dt\,.
\end{equation}
Now  \eqref{X0}, \eqref{Theta0est}, \eqref{S0I0trivest}, \eqref{B2'} and partial integration yield
\begin{align}\label{B2'est}
B^\ast_2(X_0)&=-\frac{1}{2\pi i}\int\limits_{\Delta_0}^{H}\frac{\Theta_0(t)S_0^3(t)}{N_0}\,d\,e(-N_0t)\nonumber\\
&=-\frac{\Theta_0(t)S_0^3(t)e(-N_0t)}{2\pi iN_0}\Bigg|_{\Delta_0}^{H}
+\frac{1}{2\pi iN_0}\int\limits_{\Delta_0}^{H}e(-N_0t)\,d\Big(\Theta_0(t)S_0^3(t)\Big)\nonumber\\
&\ll\varepsilon X_0^{3-c}+X_0^{-c}|\Omega|\,,
\end{align}
where
\begin{equation}\label{Omega}
\Omega=\int\limits_{\Delta_0}^{H}e(-N_0t)\,d\Big(\Theta_0(t)S_0^3(t)\Big)\,.
\end{equation}
Next we consider $\Omega$. Set
\begin{equation}\label{Gammat}
\Gamma \, :\, z=f(t)=\Theta_0(t)S_0^3(t)\,,\quad \Delta_0\leq t\leq H\,.
\end{equation}
Now \eqref{Omega} and \eqref{Gammat} imply
\begin{equation}\label{Omegaest1}
\Omega=\int\limits_{\Gamma} e\Big(-N_0f^{-1}(z)\Big)\,dz\,.
\end{equation}
Using \eqref{Theta0est}, \eqref{S0I0trivest}, \eqref{Gammat} and that the integral \eqref{Omegaest1} is independent of path we get
\begin{equation}\label{Omegaest2}
\Omega=\int\limits_{\overline{\Gamma}} e\Big(-N_0f^{-1}(z)\Big)\,dz\ll\int\limits_{\overline{\Gamma}} |dz|
\ll |f(\Delta_0)|+|f(H)| \ll \varepsilon X_0^3\,,
\end{equation}
where $\overline{\Gamma}$ is the line segment connecting the points $f(\Delta_0)$ and $f(H)$.
Bearing in mind \eqref{B2}, \eqref{B2'}, \eqref{B2'est} and \eqref{Omegaest2} we obtain
\begin{equation}\label{B2X0est}
B_2(X_0)\ll \varepsilon X_0^{3-c}\,.
\end{equation}
Finally we estimate $B_3(X_0)$.
Using\eqref{H}, \eqref{Theta0est}, \eqref{S0t}, \eqref{B3}, \eqref{S0I0trivest} we deduce
\begin{equation}\label{B3X0est}
B_3(X_0)\ll X_0^3\int\limits_{H}^{\infty}\frac{1}{t}\bigg(\frac{2k_0}{\pi t\varepsilon}\bigg)^{k_0} \,dt
\ll X_0^3\bigg(\frac{{k_0}}{\pi H\varepsilon}\bigg)^{k_0}\ll1\,.
\end{equation}
Summarizing \eqref{BX0est1}, \eqref{B1X0est}, \eqref{B2X0est} and \eqref{B3X0est} we find
\begin{equation}\label{BX0est}
B(X_0)\ll \varepsilon X_0^{3-c}\,.
\end{equation}
Taking into account \eqref{BX0}, \eqref{X0} and \eqref{BX0est}, for the number of solutions $B_0(N_0)$
of the  diophantine inequality \eqref{Ternary} we establish
\begin{equation*}
B_0(N_0)\ll \frac{\varepsilon N_0^{\frac{3}{c}-1}}{\log^3N_0}\,.
\end{equation*}
The lemma is proved.
\end{proof}

Consider the sum $\Gamma_2(X)$. We denote by $\mathcal{F}(X)$ the set of all primes
$X/2<p\leq X$ such that $p-1$ has a divisor belongs to the interval $(D,X/D)$.
The inequality $xy\leq x^2+y^2$ and  \eqref{Gamma2} give us
\begin{align*}
\Gamma_2(X)^2&\ll(\log X)^8\sum\limits_{X/2<p_1,...,p_8\leq X
\atop{|p_1^c+p_2^c+p_3^c+p_4^c-N|<\varepsilon
\atop{|p_5^c+p_6^c+p_7^c+p_8^c-N|<\varepsilon}}}
\left|\sum\limits_{d|p_1-1\atop{D<d<X/D}}\chi_4(d)\right|
\left|\sum\limits_{t|p_5-1\atop{D<t<X/D}}\chi_4(t)\right|\\
&\ll(\log X)^8\sum\limits_{X/2<p_1,...,p_8\leq X
\atop{|p_1^c+p_2^c+p_3^c+p_4^c-N|<\varepsilon
\atop{|p_5^c+p_6^c+p_7^c+p_8^c-N|<\varepsilon
\atop{p_5\in\mathcal{F}(X)}}}}\left|\sum\limits_{d|p_1-1
\atop{D<d<X/D}}\chi_4(d)\right|^2\,.
\end{align*}
The summands in the last sum for which $p_1=p_5$ can be estimated with
$\mathcal{O}\big(X^{5+\varepsilon}\big)$.\\
Therefore
\begin{equation}\label{Gamma2est1}
\Gamma_2(X)^2\ll(\log X)^8\Sigma_0+X^{5+\varepsilon}\,,
\end{equation}
where
\begin{equation}\label{Sigma0}
\Sigma_0=\sum\limits_{X/2<p_1\leq X}\left|\sum\limits_{d|p_1-1
\atop{D<d<X/D}}\chi_4(d)\right|^2\sum\limits_{X/2<p_5\leq X\atop{p_5\in\mathcal{F}(X)
\atop{p_5\neq p_1}}}\sum\limits_{X/2<p_2, p_3, p_4, p_6, p_7, p_8\leq X
\atop{|p_1^c+p_2^c+p_3^c+p_4^c-N|<\varepsilon
\atop{|p_5^c+p_6^c+p_7^c+p_8^c-N|<\varepsilon}}}1\,.
\end{equation}
Now  \eqref{Sigma0} and Lemma \ref{Thenumberofsolutions}  imply
\begin{equation}\label{Sigma0est}
\Sigma_0\ll \frac{X^{6-2c}}{\log^6X}\,\Sigma'_0\,\Sigma''_0\,,
\end{equation}
where
\begin{equation*}
\Sigma'_0=\sum\limits_{X/2<p\leq X}\left|\sum\limits_{d|p-1\atop{D<d<X/D}}\chi_4(d)\right|^2\,,
\quad \Sigma''_0=\sum\limits_{X/2<p\leq X\atop{p\in\mathcal{F}(X)}}1\,.
\end{equation*}
Applying Lemma \ref{Hooley1} we get
\begin{equation}\label{Sigma0'est}
\Sigma'_0\ll\frac{X(\log\log X)^7}{\log X}\,.
\end{equation}
Using Lemma \ref{Hooley2} we obtain
\begin{equation}\label{Sigma0''est}
\Sigma''_0\ll\frac{X(\log\log X)^3}{(\log X)^{1+2\theta_0}}\,,
\end{equation}
where $\theta_0$ is denoted by  \eqref{theta0}.

Finally \eqref{Gamma2est1}, \eqref{Sigma0est},
\eqref{Sigma0'est} and \eqref{Sigma0''est} yield
\begin{equation}\label{Gamma2est2}
\Gamma_2(X)\ll\frac{ X^{4-c}(\log\log X)^5}{(\log X)^{\theta_0}}
=\frac{\varepsilon X^{4-c}}{\log\log X}\,.
\end{equation}

\section{Lower bound  for $\mathbf{\Gamma_1(X)}$}\label{SectionGamma1}
\indent

Consider the sum $\Gamma_1(X)$.
From \eqref{Gamma1}, \eqref{Ild} and \eqref{Ilddecomp} it follows
\begin{equation}\label{Gamma1decomp}
\Gamma_1(X)=\Gamma_1^{(1)}(X)+\Gamma_1^{(2)}(X)+\Gamma_1^{(3)}(X)\,,
\end{equation}
where
\begin{equation}\label{Gamma1i}
\Gamma_1^{(i)}(X)=\sum\limits_{d\leq D}\chi_4(d)I_{1,d}^{(i)}(X)\,,\;\; i=1,\,2,\,3.
\end{equation}

\subsection{Estimation of $\mathbf{\Gamma_1^{(1)}(X)}$}
\indent

First we consider $\Gamma_1^{(1)}(X)$.
Using formula \eqref{Ild1est} for $J=(X/2,X]$, \eqref{Gamma1i}
and treating the reminder term by the same way as for $\Gamma_3^{(1)}(X)$
we find
\begin{equation} \label{Gamma11est1}
\Gamma_1^{(1)}(X)=\Phi(X)\sum\limits_{d\leq D}\frac{\chi_4(d)}{\varphi(d)}
+\mathcal{O}\bigg(\frac{\varepsilon X^{4-c}}{\log X}\bigg)\,,
\end{equation}
where
\begin{equation*}
\Phi(X)=\int\limits_{-\infty}^{\infty}\Theta(t)I^4(t)e(-Nt)\,dt\,.
\end{equation*}
By  Lemma \ref{IIIIest}  we get
\begin{equation}\label{Philowerbound}
\Phi(X)\gg\varepsilon X^{4-c}\,.
\end{equation}
According to \cite{Dimitrov2} we have
\begin{equation}\label{sumchiphi}
\sum\limits_{d\leq D}\frac{\chi_4(d)}{\varphi(d)}=
\frac{\pi}{4}\prod\limits_p \left(1+\frac{\chi_4(p)}{p(p-1)}\right)+\mathcal{O}\Big(X^{-1/20}\Big)\,.
\end{equation}
From \eqref{Gamma11est1} and \eqref{sumchiphi} we obtain
\begin{equation}\label{Gamma11est2}
\Gamma_1^{(1)}(X)=\frac{\pi}{4}\prod\limits_p \left(1+\frac{\chi_4(p)}{p(p-1)}\right) \Phi(X)
+\mathcal{O}\bigg(\frac{\varepsilon X^{4-c}}{\log X}\bigg)+\mathcal{O}\Big(\Phi(X)X^{-1/20}\Big)\,.
\end{equation}
Now \eqref{Philowerbound} and \eqref{Gamma11est2} imply
\begin{equation}\label{Gamma11est3}
\Gamma_1^{(1)}(X)\gg\varepsilon X^{4-c}\,.
\end{equation}

\subsection{Estimation of $\mathbf{\Gamma_1^{(2)}(X)}$}
\indent

Arguing as in the estimation of $\Gamma_3^{(2)}(X)$ we find
\begin{equation} \label{Gamma12est}
\Gamma_1^{(2)}(X)\ll\frac{\varepsilon X^{4-c}}{\log X}\,.
\end{equation}

\subsection{Estimation of $\mathbf{\Gamma_1^{(3)}(X)}$}
\indent

By \eqref{Ild3est} and \eqref{Gamma1i} it follows
\begin{equation}\label{Gamma13est}
\Gamma_1^{(3)}(X)\ll\sum\limits_{m<D}\frac{1}{d}\ll \log X\,.
\end{equation}

\subsection{Estimation of $\mathbf{\Gamma_1(X)}$}
\indent

Summarizing  \eqref{Gamma1decomp}, \eqref{Gamma11est3}, \eqref{Gamma12est} and \eqref{Gamma13est} we obtain
\begin{equation} \label{Gamma1est}
\Gamma_1(X)\gg\varepsilon X^{4-c}\,.
\end{equation}

\section{Proof of the Theorem}\label{Sectionfinal}
\indent

Taking into account \eqref{varepsilon}, \eqref{GammaGamma0}, \eqref{Gamma0decomp},
\eqref{Gamm3est}, \eqref{Gamma2est2} and \eqref{Gamma1est} we deduce
\begin{equation*}
\Gamma(X)\gg\varepsilon X^{4-c}=\frac{X^{4-c}(\log\log X)^6}{(\log X)^{\theta_0}}\,.
\end{equation*}
The last lower bound yields
\begin{equation}\label{Lowerbound}
\Gamma(X) \rightarrow\infty \quad \mbox{ as } \quad X\rightarrow\infty\,.
\end{equation}
Bearing in mind  \eqref{Gamma} and \eqref{Lowerbound} we establish Theorem \ref{Theorem}.

\vskip20pt
\footnotesize
\begin{flushleft}
S. I. Dimitrov\\
Faculty of Applied Mathematics and Informatics\\
Technical University of Sofia \\
8, St.Kliment Ohridski Blvd. \\
1756 Sofia, BULGARIA\\
e-mail: sdimitrov@tu-sofia.bg\\
\end{flushleft}


\begin{thebibliography}{0}

\bibitem{Baker} R. Baker, {\it Some diophantine equations and inequalities with primes},
Funct. Approx. Comment. Math., \textbf{64} (2), (2021), 203 -- 250.

\bibitem{Baker-Weingartner} R. Baker, A. Weingartner, {\it A ternary diophantine inequality over primes},
Acta Arith., {\bf162}, (2014), 159 -- 196.

\bibitem{Dimitrov1} S. I. Dimitrov, {\it A quaternary  diophantine inequality by prime numbers of a special type},
Proc. Techn. Univ.-Sofia, \textbf{67}, 2, (2017), 317 -- 326.

\bibitem{Dimitrov2} S. I. Dimitrov, {\it Diophantine approximation with one prime of the form $p=x^2+y^2+1$},
Lith. Math. J., \textbf{61}, 4, (2021), 445 -- 459.

\bibitem{Dimitrov3}  S. I. Dimitrov, {\it A ternary diophantine inequality by primes with one of the form $p=x^2+y^2+1$},
Ramanujan J., \textbf{59}, 2, (2022), 571 -- 607.

\bibitem{Graham-Kolesnik} S. W. Graham, G. Kolesnik, {\it Van der Corput's Method of Exponential Sums},
Cambridge University Press, New York, (1991).

\bibitem{Heath}  D. R. Heath-Brown, {\it The Piatetski-Shapiro prime number theorem},
J. Number Theory, \textbf{16}, (1983), 242 -- 266.

\bibitem{Hooley} C. Hooley, {\it Applications of sieve methods to the theory of numbers},
Cambridge Univ. Press, (1976).

\bibitem{Iwaniec-Kowalski} H. Iwaniec, E. Kowalski, {\it Analytic number theory},
Colloquium Publications, \textbf{53}, Amer. Math. Soc., (2004).

\bibitem{Cai}  S. Li, Y. Cai, {\it  On a binary Diophantine inequality involving prime numbers},
Ramanujan J., {\bf 54}, (2021), 571 -- 589.

\bibitem{Linnik} Ju. Linnik, {\it An asymptotic formula in an additive problem of Hardy and Littlewood},
Izv. Akad. Nauk SSSR, Ser.Mat., {\bf24}, (1960), 629 -- 706 (in Russian).

\bibitem{Shapiro} I. Piatetski-Shapiro, {\it On a variant of the Waring-Goldbach problem},
Mat. Sb., {\bf30}, (1952), 105 -- 120, (in Russian).

\bibitem{Sargos-Wu} P. Sargos, J. Wu, {\it Multiple exponential sums with monomials and their applications in number theory},
Acta Math. Hungar., \textbf{87}, (2000), 333 -- 354.

\bibitem{Titchmarsh} E. Titchmarsh, {\it The Theory of the Riemann Zeta-function}
(revised by D. R. Heath-Brown), Clarendon Press, Oxford (1986).

\bibitem{Tolev} D. Tolev, {\it On a diophantine inequality involving prime numbers},
Acta Arith., {\bf 61}, (1992), 289 -- 306.

\bibitem{Zhai-Cao} W. Zhai, X. Cao, {\it On a diophantine inequality over primes},
Adv. Math. (China), {\bf 32}(1), (2003), 63 -- 73.

\end{thebibliography}
\end{document}